\newtheorem{lemma}{Lemma}
\newtheorem{theorem}{Theorem}
\newtheorem{remark}{Remark}
\newtheorem{example}{Example}
\newtheorem{definition}{Definition}
\newtheorem{corollary}{Corollary}
\newtheorem{proposition}{Proposition}
\newcommand{\Span}{\operatorname{Span}}
\newcommand{\Spec}{\operatorname{Spec}}
\newcommand{\alg}{\operatorname{alg}}
\newcommand{\ad}{\operatorname{ad}}
\def\rank{\mathrm{rank}\,}
\def\tr{\mathrm{tr}}
\begin{document}

\begin{center}
{\Large
Quasi-definite primitive axial algebras of Jordan type half}

\smallskip

Ilya Gorshkov, Vsevolod Gubarev
\end{center}

\begin{abstract}
Axial algebras are commutative nonassociative algebras generated by 
a~finite set of primitive idempotents whose action on an algebra is semisimple, 
and the fusion laws on the products between eigenvectors 
for these idempotents are fulfilled.
We find the sufficient conditions in terms of the Frobenius form 
and the properties of idempotents under which an axial algebra 
of Jordan type half is unital and has a finite capacity.

{\it Keywords}:
axial algebra of Jordan type, Frobenius form, semisimplicity.
\end{abstract}

\section{Introduction}

The notion of an axial algebra appeared in the article of J.~Hall, 
F.~Rehren, and S.~Shpectorov in 2015~\cite{HRS} as a~natural extension 
of Majorana theory earlier developed by A.A.~Ivanov~\cite{Ivanov}. 
Axial algebras generalize associative algebras, Jordan algebras, 
and the Griess algebra whose automorphism group is the Monster group.
The main idea which lies behind the theory of axial algebras 
is to realize finite simple groups as automorphism (sub)groups 
of some finite-dimensional commutative nonassociative algebras.

Primitive axial algebras of Jordan type are the most 
well-studied objects among all axial ones, their fusion laws 
represent the properties of the Peirce decomposition fulfilled 
for all Jordan algebras~\cite{Albert}. 
It is known that every primitive axial algebra~$A$ of Jordan type 
has an invariant normalized bilinear Frobenius form $(\cdot,\cdot)$~\cite{HSS}.
The radical of the form coincides with the maximal ideal of~$A$ 
not containing axes from $A$. An axial algebra is called semisimple 
if the radical of its Frobenius form is zero. 
Despite of the obtained results, the question when an axial algebra 
of Jordan type $1/2$ is finite-dimensional or contains a unit is still open. 
We find the sufficient conditions under which an axial algebra 
of Jordan type $1/2$ containing a unit is semisimple and finite-dimensional. 

A~primitive axial algebra~$A$ of Jordan type $1/2$ is called quasi-definite 
if the equality $(a,b) = 1$ for axes $a,b\in A$ implies $a = b$. 
Simple Jordan algebra $J(f)$ over $\mathbb{R}$ with positive-definite form~$f$ 
and the Jordan algebra $H_n(\mathbb{R})$ of symmetric algebras of order~$n$ 
give examples of quasi-definite primitive axial algebras 
of Jordan type $1/2$ (see~\S3).
A~generalization of quasi-definite primitive axial algebras of Jordan type~$1/2$
is provided by primitive axial algebras 
which have a~quasi-definite linear basis $X$ of axes, i.\,e. 
$(x,y) \neq1$ for all pairwise distinct $x,y\in X$.
We show that the matrix algebra $M_n(F)$ over an infinite field~$F$ 
has a~quasi-definite basis consisting of axes. 
Also, every Matsuo algebra $M_{1/2}(G,D)$ has a~quasi-definite basis of axes (see~\S3).

A primitive axial algebra~$A$ of Jordan type $1/2$ is called strongly axial 
if every semisimple primitive idempotent $e\in A$ such that 
$\Spec(\ad_e)\subset \{0,1/2,1\}$, where $\ad_e$ denotes the operator 
of the multiplication on $e$, is a primitive axis in $A$. 
Thus, in a strongly axial algebra, the fusion laws for 
all semisimple primitive idempotents (not necessarily axes) have to be fulfilled.
Actually, every axial algebra~$A$ of Jordan type $1/2$ that 
is also a~Jordan algebra is strongly axial~\cite[Theorem~6]{Albert}.

We concentrate on the study of the properties of finitely generated 
quasi-definite strongly axial algebras of Jordan type $1/2$.
Let $A$ be such an algebra.
We state that given an axis~$a\in A$, a~subalgebra $A_0(a)$ 
is an axial algebra of Jordan type~$1/2$ 
which has a quasi-definite basis of axes. 

The main results of the work are the following.
Suppose that $A$ is a~finitely generated quasi-definite 
strongly axial algebra of Jordan type $1/2$.
If $A$ is additionally finite-dimensional and semisimple, 
then $A$ is unital (Theorem~\ref{Thm:FD2Unit}).
We say that a unital axial algebra~$A$ has a finite capacity~$k$, 
if its unit~$e$ is respresented as a sum of $k$ pairwise orthogonal axes
and such $k$ is minimal.
We prove that finitely generated quasi-definite strongly axial algebras of Jordan type $1/2$
has a finite capacity (Theorem~\ref{lengthe}).

Let us give a brief outline of the work.
In~\S2, the required preliminaries are stated.
In~\S3, definite and quasi-definite primitive axial algebras 
of Jordan type~$1/2$ are defined and different examples of them are given.

In~\S4, we state that the element
$$
x_a(b) = \frac{2ab - (a,b)a - b}{(a,b)-1}
$$
constructed by two distinct axes $a,b$ of a quasi-definite 
axial algebra~$A$ of Jordan type~$1/2$ is a~primitive semisimple 
idempotent in~$A$ (Lemma~\ref{Axe}).
In~\S5, we show that if $A$ is also strongly axial, 
then $x_a(b)$ is an axis in $A$ and $A_0(a)$.

In~\S6, we prove the first of the two main results of the work:
any semisimple finite-dimensional quasi-definite strongly 
axial algebra of Jordan type $1/2$ is unital (Theorem~\ref{Thm:FD2Unit}).

In~\S7, we prove that a finitely generated quasi-definite strongly axial algebras of Jordan type $1/2$
has a finite capacity (Theorem~\ref{lengthe}).

\section{Preliminaries}

We consider only commutative (but not necessarily associative or unital) 
algebras over a~ground field $F$ of characteristic not two.
Given an element $a\in A$ and $\lambda\in F$, 
we introduce the subspace 
$A_\lambda(a) = \{x\in A\mid ax = \lambda x\}$.  
Define the map 
$\mathrm{ad}_a\colon A\to A$ as follows, $\mathrm{ad}_a(x) = ax$. 
If $\lambda\in\mathrm{Spec}(\mathrm{ad}_a)$, then $A_\lambda(a)\neq(0)$,
otherwise $A_\lambda(a) = (0)$.
An idempotent $e\in A$ is called an axis if 
$\mathrm{ad}_a$ is diagonalizable (semisimple). 
An axis $e\in A$ is called primitive if $A_1(e) = \Span\{e\}$.

A commutative algebra generated by axes is called an {\it axial algebra}. 

We will focus on primitive axial algebras of Jordan type $\eta\neq0,1$, 
it means that an axial algebra $A$ is generated by primitive axes $a_i$, $i\in I$, 
such that $\mathrm{Spec}(\mathrm{ad}_{a_i})\subset \{0,\eta,1\}$, 
and the following fusion rules are fulfilled,
\begin{equation}\label{FusionRules}
\begin{gathered}
A_0(a_i)^2\subseteq A_0(a_i),\quad
A_{\eta}(a_i)^2\subseteq A_0(a_i)+A_1(a_i),\\
(A_0(a_i)+A_1(a_i))A_{\eta}(a_i)\subseteq A_{\eta}(a_i),\quad
A_0(a_i)A_1(a_i) = (0).
\end{gathered}
\end{equation}
In particular, it implies that 
$(A_0(a_i)\oplus A_1(a_i))\oplus A_\eta(a)$ 
is a $\mathbb{Z}_2$-graded algebra.

Below, by an axial algebra of Jordan type we always mean a~primitive one. 
Analogously, an axis~$a$ in $A$ is always a primitive one.

Given an axial algebra $A$ of Jordan type~$1/2$ and an axis $a\in A$, 
the map $\tau_a\colon A\to A$ which acts as follows,
$\tau_a(x) = (-1)^{2\lambda}x$ for $x\in A_\lambda(a)$,
is an involution of $A$ called Miyamoto involution.

Axial algebras of Jordan type are known to have some significant properties.
Every axial algebra of Jordan type is spanned as a~vector space by axes~\cite{HRS}. 
Moreover, any axial algebra~$A$ of Jordan type admits 
a~unique Frobenius form, a~nonzero bilinear symmetric form 
$(\cdot,\cdot)\colon A\times A\to F$ which is invariant, i.\,e., 
$(ab,c) = (a,bc)$ for all $a,b,c\in A$, and which satisfies the property that 
$(a,a) = 1$ for every axis $a\in A$~\cite{HSS}.

Given an axial algebra of Jordan type~$A$,
$A_\mu$ and $A_\lambda$ are orthogonal with respect to the Frobenius form 
when $\lambda\neq\mu$. 
The radical of the Frobenius form 
$A^\perp := \{x \in A \mid (x, v) = 0$ for all $v \in A\}$
coincides with the unique largest ideal $R(A)$ of~$A$ 
containing no axes from~$A$~\cite{HRS}.
An axial algebra $A$ of Jordan type is called semisimple if $R(A) = (0)$.

Given an algebra~$A$, by $\langle X\rangle_{\alg}$ 
we denote the subalgebra of~$A$ generated by the set~$X$. 
The set of all words in an alphabet~$X$ we denote as $X^*$.

\begin{lemma}{(Seress Lemma, \cite{HRS})}
Given an axial algebra~$A$ of Jordan type~$1/2$ and an axis $a\in A$,
we have $a(xz) = (ax)z$ for all $x\in A$ and $z\in A_0(a)\oplus A_1(a)$.
\end{lemma}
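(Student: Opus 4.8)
The plan is to use the eigenspace decomposition $A = A_0(a)\oplus A_{1/2}(a)\oplus A_1(a)$ coming from the semisimplicity of $\ad_a$, together with the fusion rules~\eqref{FusionRules}. Since both sides of $a(xz)=(ax)z$ are bilinear in the pair $(x,z)$, it is enough to verify the identity when $x$ lies in a single eigenspace $A_\lambda(a)$ with $\lambda\in\{0,1/2,1\}$ and when $z$ lies in $A_0(a)$ or in $A_1(a)$; equivalently, after writing $x=x_0+x_{1/2}+x_1$ and $z=z_0+z_1$ with $x_\lambda,z_i$ homogeneous, it suffices to treat homogeneous $x$ and $z$.

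The key step is to show that left multiplication by any $z\in A_0(a)\oplus A_1(a)$ preserves each eigenspace $A_\lambda(a)$ of $\ad_a$. Writing $z=z_0+z_1$ with $z_i\in A_i(a)$ and taking $x\in A_\lambda(a)$, I would read this off from~\eqref{FusionRules} case by case. For $\lambda=0$: $xz_0\in A_0(a)^2\subseteq A_0(a)$ and $xz_1\in A_0(a)A_1(a)=(0)$, so $xz\in A_0(a)$. For $\lambda=1/2$: $xz_0,xz_1\in (A_0(a)+A_1(a))A_{1/2}(a)\subseteq A_{1/2}(a)$, so $xz\in A_{1/2}(a)$. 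For $\lambda=1$: by primitivity $A_1(a)=\Span\{a\}$, so $x=\mu a$ for some $\mu\in F$, and $xz=\mu(az_0+az_1)=\mu z_1\in A_1(a)$. In each case $xz\in A_\lambda(a)$.

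Granting this, the lemma follows at once: for homogeneous $x\in A_\lambda(a)$ we have $ax=\lambda x$, hence $(ax)z=\lambda(xz)$, while $a(xz)=\lambda(xz)$ because $xz\in A_\lambda(a)$ by the previous paragraph; thus $a(xz)=(ax)z$. Expanding both sides bilinearly over the decompositions of $x$ and $z$ then yields the general statement.

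I do not expect a genuine obstacle here --- the argument is a direct case analysis. The only point needing a word of justification is the case $\lambda=1$, since the displayed fusion rules~\eqref{FusionRules} do not explicitly record $A_1(a)^2\subseteq A_1(a)$; this is supplied by primitivity of the axis $a$, which is in force throughout the paper, together with $a^2=a$.
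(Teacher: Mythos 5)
Your argument is correct, and it is essentially the standard proof of Seress's Lemma: the fusion rules show that $\ad_z$ preserves each eigenspace of $\ad_a$ for $z\in A_0(a)\oplus A_1(a)$, hence $\ad_z$ and $\ad_a$ commute, which is exactly the stated identity; your handling of the $\lambda=1$ case via primitivity and $a^2=a$ is the right way to cover the product $A_1(a)^2$, which the displayed fusion rules omit. The paper itself gives no proof, citing \cite{HRS}, so there is nothing to compare against; your write-up supplies the omitted argument correctly.
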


\begin{proposition}[\cite{HRS}] \label{prop:2-gen}
Let $A = \langle a,b\rangle_{\alg}$ be an axial algebra 
of Jordan type~$1/2$ generated by two distinct axes $a, b$. 
Denote $\sigma = ab - (a+b)/2$ and $\alpha = (a,b)$.

a) $A$ is 2-dimensional precisely in the following cases:

(1) $ab = 0$, then $A = Fa\oplus Fb$ and $\alpha = 0$;

(2) $\sigma = 0$ and $\alpha = 1$.

b) $A$ is 3-dimensional precisely when $\sigma,ab\neq 0$ and 
$\sigma v = \pi v$ for $v \in \{a,b,\sigma\}$,
where $\pi = (\alpha-1)/2$.
Moreover, $A$ is unital if and only if $\alpha\neq1$, 
in which case the unit equals $\sigma/\pi$.
\end{proposition}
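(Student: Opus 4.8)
The plan is to prove that $A$ coincides with the span $L:=\Span\{a,b,ab\}$ and then to read off the classification by linear algebra together with the admissible eigenvalues and the Frobenius form. First I would decompose $b$ with respect to $\ad_a$: since $a$ is a primitive axis, $A_1(a)=Fa$, so $b=\alpha_0 a+u+v$ with $u\in A_{1/2}(a)$ and $v\in A_0(a)$, and pairing with $a$ (using $(a,a)=1$ and the orthogonality of distinct eigenspaces) identifies $\alpha_0=(a,b)=\alpha$. Multiplying this by $a$ gives $ab=\alpha a+\tfrac12 u$, hence $u=2(ab-\alpha a)$, $v=\alpha a+b-2ab$, and $\sigma=ab-\tfrac{a+b}{2}=\pi a-\tfrac12 v$. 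Eliminating $u$ from $a(ab)=\alpha a+\tfrac14 u$ yields the identity $a(ab)=\tfrac12 ab+\tfrac\alpha2 a$ and, symmetrically, $b(ab)=\tfrac12 ab+\tfrac\alpha2 b$; combined with $av=0$ these give $\sigma a=\pi a$ and $\sigma b=\pi b$ immediately.

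The main obstacle is the dimension bound. Since $A$ is generated by $a$ and $b$, it is spanned by words in $a,b$, and by the identity $a(ab)=\tfrac12 ab+\tfrac\alpha2 a$ (and its mirror) the space $L$ is a subalgebra as soon as $(ab)^2\in L$; hence it suffices to place $(ab)^2$ in $L$. For this I would use the Miyamoto involution: $p:=\tau_a(b)=b-2u=4\alpha a-4ab+b$ is again a primitive axis, so $p^2=p$, and expanding $p^2$ while reducing the length-three terms by the identities above yields $(ab)^2=\tfrac\alpha4(a+b)+\tfrac\alpha2 ab\in L$. Therefore $A=L$ and $\dim A\le3$; since two distinct primitive axes are linearly independent, $\dim A\in\{2,3\}$. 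Substituting the formula for $(ab)^2$ into $v=\alpha a+b-2ab$ also gives $v^2=(1-\alpha)v$, whence $\sigma^2=\pi^2 a+\tfrac14 v^2=\pi\sigma$; so the relations $\sigma w=\pi w$ for $w\in\{a,b,\sigma\}$ hold in every case.

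Next I would separate the cases: $\dim A=2$ exactly when $ab\in\Span\{a,b\}$. Writing $ab=\lambda a+\mu b$ and pairing with $a$ and with $b$ — using $(ab,a)=(ab,b)=\alpha$, which is immediate from $ab=\alpha a+\tfrac12 u$ — gives $\lambda+\alpha\mu=\alpha\lambda+\mu=\alpha$, so $(1-\alpha)(\lambda-\mu)=0$; a short inspection of the eigenvalues of $\ad_a$ and $\ad_b$ on $\Span\{a,b\}$, which must lie in $\{0,\tfrac12,1\}$ with $1$-eigenspaces $Fa$ and $Fb$, rules out everything except $ab=0$ (then $\alpha=0$, case (1)) and $ab=\tfrac{a+b}{2}$, i.e.\ $\sigma=0$ (then $\alpha=1$, by pairing with $a$, case (2)); conversely both possibilities give $\dim A=2$. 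This proves (a), and then $\dim A=3\iff ab\notin\Span\{a,b\}\iff\sigma\neq0$ and $ab\neq0$ (the last equivalence by (a)), which with the relations $\sigma w=\pi w$ is the first assertion of (b).

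For unitality, assume $\dim A=3$, so that $\{a,b,\sigma\}$ is a basis. If $\alpha\neq1$ then $\pi\neq0$ and $e:=\sigma/\pi$ satisfies $ea=a$, $eb=b$, $e\sigma=\sigma$, so $e$ is the unit. If $\alpha=1$ then $\pi=0$, so $\sigma a=\sigma b=\sigma^2=0$, i.e.\ $\sigma A=0$, and since $\sigma\neq0$ the algebra has no unit. The only genuinely non-routine step in the whole argument is the identity $(ab)^2=\tfrac\alpha4(a+b)+\tfrac\alpha2 ab$ (equivalently $v^2=(1-\alpha)v$); everything else is bookkeeping with the Frobenius form and the permitted eigenvalues.
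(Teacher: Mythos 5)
The paper does not prove this proposition at all --- it is imported from Hall--Rehren--Shpectorov \cite{HRS} as a citation --- so there is no in-paper argument to compare against; I can only assess your proof on its own terms, and it is correct. Your route (decompose $b=\alpha a+u+v$ along the eigenspaces of $\ad_a$, derive $a(ab)=\tfrac12(\alpha a+ab)$ and its mirror, and then obtain $(ab)^2=\tfrac{\alpha}{4}(a+b+2ab)$ by expanding $\tau_a(b)^2=\tau_a(b)$) is a legitimate self-contained derivation; I checked the Miyamoto computation and it does yield exactly the stated formula, after which closure of $\Span\{a,b,ab\}$, the relations $\sigma w=\pi w$, and the unitality discussion all follow as you say. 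Two steps are compressed but sound: (i) the ``short inspection'' in the $2$-dimensional case --- writing $ab=\lambda a+\mu b$, primitivity and semisimplicity of $\ad_a$, $\ad_b$ force $\lambda,\mu\in\{0,\tfrac12\}$, and combined with $\lambda+\mu\alpha=\mu+\lambda\alpha=\alpha$ this leaves only $\lambda=\mu=0$ (so $\alpha=0$) or $\lambda=\mu=\tfrac12$ (so $\alpha=1$); and (ii) the linear independence of two distinct primitive axes, which follows since $b=\lambda a$ and $b^2=b$ force $\lambda\in\{0,1\}$, both excluded. Note that your argument leans on the Frobenius form and on $\tau_a$ being an algebra automorphism; both are stated as known facts in the paper's preliminaries, so this is fine here, although the original proof in \cite{HRS} predates the existence theorem for the Frobenius form and therefore has to work harder at that point. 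Within the framework of this paper your proposal is a complete and correct proof.
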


\begin{corollary}
Let $A = \langle a,b\rangle_{\alg}$ be an axial algebra of Jordan type~$1/2$ 
generated by two distinct axes $a, b$. 
Denote $\alpha = (a,b)$.
Then we have
\begin{equation} \label{2gen:easy-formulae}
(ab)b = \frac{1}2(\alpha b + ab),\quad
(ab)a = \frac{1}2(\alpha a + ab),\quad
(ab)(ab) = \frac{\alpha}4(a+b + 2ab).
\end{equation}
\end{corollary}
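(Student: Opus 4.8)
The plan is to derive the three identities from Proposition~\ref{prop:2-gen}, which exhausts the possibilities for a two-generated axial algebra of Jordan type~$1/2$: either $A$ is two-dimensional (the subcase $ab=0$, $\alpha=0$, or the subcase $\sigma=0$, $\alpha=1$), or $A$ is three-dimensional with $\sigma v=\pi v$ for $v\in\{a,b,\sigma\}$ and $\pi=(\alpha-1)/2$. In each case I would express $ab$ in convenient terms and check the formulae by a short computation, keeping the notation $\sigma=ab-(a+b)/2$, $\alpha=(a,b)$, $\pi=(\alpha-1)/2$ of the proposition.

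First I would dispose of the two-dimensional cases. If $ab=0$ and $\alpha=0$, every term on both sides of each identity vanishes. If $\sigma=0$ and $\alpha=1$, then $ab=(a+b)/2$, so using $a^2=a$, $b^2=b$ one gets $(ab)a=\tfrac12(a+ab)$, $(ab)b=\tfrac12(b+ab)$ and $(ab)(ab)=\tfrac14(a+b)^2=\tfrac14(a+b+2ab)$, which match the claimed values since $\alpha=1$. In the three-dimensional case I would write $ab=\sigma+(a+b)/2$ and use $\sigma a=\pi a$, $\sigma b=\pi b$, $\sigma^2=\pi\sigma$. Then $(ab)a=\sigma a+\tfrac12(a^2+ba)=\pi a+\tfrac12(a+ab)$, and since $\pi+\tfrac12=\alpha/2$ this equals $\tfrac12(\alpha a+ab)$; the identity for $(ab)b$ is symmetric. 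For the square, $(ab)(ab)=\sigma^2+\sigma(a+b)+\tfrac14(a+b)^2=\pi\sigma+\pi(a+b)+\tfrac14(a+2ab+b)$; substituting $\sigma=ab-(a+b)/2$ and collecting terms, the coefficient of $ab$ becomes $\pi+\tfrac12=\alpha/2$ and that of $a+b$ becomes $\pi+\tfrac14-\tfrac\pi2=\tfrac\pi2+\tfrac14=\alpha/4$, giving $(ab)(ab)=\tfrac\alpha4(a+b+2ab)$.

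I do not expect a genuine obstacle here: the content is entirely in Proposition~\ref{prop:2-gen}, and what remains is routine arithmetic. One remark worth making is that the first two identities do not need the dimension bound at all: decomposing $b=\alpha a+s+b_0$ along the $\ad_a$-eigenspaces — primitivity puts the $1$-eigenpart in $\Span\{a\}$ and the Frobenius form identifies its coefficient as $(a,b)=\alpha$, with $s\in A_{1/2}(a)$ and $b_0\in A_0(a)$ — gives $ab=\alpha a+\tfrac12 s$, whence $(ab)a=\alpha a+\tfrac14 s=\tfrac12(\alpha a+ab)$ at once, and symmetrically for $b$. The third identity, by contrast, is equivalent to $b_0^2=(1-\alpha)b_0$ inside $A_0(a)$, which is not forced by the fusion rules relative to $a$ alone; this is precisely where the classification (equivalently, the fact that $\langle a,b\rangle_{\alg}$ has dimension at most $3$) must be used.
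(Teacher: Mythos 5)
Your proof is correct and follows the route the paper intends: the corollary is stated without proof precisely because it is the routine case-by-case computation from Proposition~\ref{prop:2-gen} that you carry out (and your arithmetic checks out in all three cases). Your closing remark --- that the first two identities already follow from the eigenspace decomposition of $b$ with respect to $a$, while the third genuinely needs the $2$-generated classification --- is a correct and worthwhile observation, but it does not change the substance of the argument.
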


Given axes $a,b\in A$, we write
$$
b = a_0(b)+a_{1/2}(b)+\alpha(b)a,
$$
where $a_0(b)\in A_0(a)$, $a_{1/2}(b)\in A_{1/2}(a)$, 
and $\alpha(b) = (a,b)\in F$.

\begin{lemma}\label{help}
Let $A$ be an axial algebra of Jordan type $1/2$,
let $a,b\in A$ be axes, $\alpha=(a,b)$.
Then we have the following equalities,
\begin{gather}
a_0(b)^2 = (1-\alpha)a_0(b),  \label{a0^2} \\
a_{1/2}(b)^2
= \alpha a_0(b)+(\alpha-\alpha^2)a,  \label{a1/2^2} \\
a_0(b)a_{1/2}(b)
= \frac{1}{2}(1-\alpha)a_{1/2}(b). \label{a0*a1/2}
\end{gather}
\end{lemma}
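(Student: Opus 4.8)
The plan is to work entirely inside the two-generated subalgebra $\langle a,b\rangle_{\alg}$ and to exploit the eigenspace decomposition of $\ad_a$ together with the fusion rules~\eqref{FusionRules} for $\eta=1/2$. Writing $b=a_0(b)+a_{1/2}(b)+\alpha a$ with $a_0(b)\in A_0(a)$, $a_{1/2}(b)\in A_{1/2}(a)$, I would first record the obvious consequence that $ab=\tfrac12 a_{1/2}(b)+\alpha a$, so that $a_{1/2}(b)=2ab-2\alpha a=2(ab-\alpha a)$ and $a_0(b)=b-\alpha a-a_{1/2}(b)=b+\alpha a-2ab$. These expressions turn every claimed identity into a statement purely about products of $a,b,ab$, for which the corollary formulas~\eqref{2gen:easy-formulae} are available. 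The cleanest route, however, is to avoid expanding everything in the $\{a,b,ab\}$ basis and instead use the structural facts: $a_0(b)$ lies in $A_0(a)$, which is a subalgebra (so $a_0(b)^2\in A_0(a)$), and $a_0(b)a_{1/2}(b)\in A_{1/2}(a)$ by the rule $(A_0(a)+A_1(a))A_{1/2}(a)\subseteq A_{1/2}(a)$.

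First I would prove~\eqref{a0^2}. Since $b$ is an idempotent, expanding $b^2=b$ using the $\ad_a$-grading and separating eigen-components gives, in the $A_0(a)$-part, $a_0(b)^2+\alpha^2\cdot 0=a_0(b)$ is wrong as stated — more carefully, $b^2 = a_0(b)^2 + a_{1/2}(b)^2 + \alpha^2 a + 2a_0(b)a_{1/2}(b) + 2\alpha a\,a_0(b) + 2\alpha a\,a_{1/2}(b)$; using $a\,a_0(b)=0$, $a\,a_{1/2}(b)=\tfrac12 a_{1/2}(b)$, and the fact that $a_{1/2}(b)^2\in A_0(a)\oplus A_1(a)$ and $a_0(b)a_{1/2}(b)\in A_{1/2}(a)$, I collect the $A_{1/2}(a)$-component: $a_{1/2}(b)+2a_0(b)a_{1/2}(b)+\alpha a_{1/2}(b)=a_{1/2}(b)$, which yields~\eqref{a0*a1/2} directly once the $A_1(a)=\Span\{a\}$ part of $a_{1/2}(b)^2$ is isolated. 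Simultaneously, the $A_0(a)$-component gives $a_0(b)^2 + (A_0(a)\text{-part of }a_{1/2}(b)^2)=a_0(b)$, and the $A_1(a)$-component gives $(A_1(a)\text{-part of }a_{1/2}(b)^2)+\alpha^2 a=\alpha a$.

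So the strategy is: compute $a_{1/2}(b)^2$ first, write it as $\beta a_0(b)+\gamma a$ for scalars $\beta,\gamma$ to be determined (it must lie in $A_0(a)\oplus A_1(a)$, and its $A_0(a)$-part must be a multiple of $a_0(b)$ because in the two-generated algebra $A_0(a)$ is at most one-dimensional, spanned by $a_0(b)$ when $a_0(b)\ne 0$, by Proposition~\ref{prop:2-gen}); then read off $\beta=\alpha$, $\gamma=\alpha-\alpha^2$ from the idempotency relation $b^2=b$; then~\eqref{a0^2} and~\eqref{a0*a1/2} drop out of the $A_0(a)$- and $A_{1/2}(a)$-components respectively. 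To pin down $a_{1/2}(b)^2$ I would use $a_{1/2}(b)=2(ab-\alpha a)$ and~\eqref{2gen:easy-formulae}: $a_{1/2}(b)^2 = 4(ab)(ab) - 8\alpha (ab)a + 4\alpha^2 a = 4\cdot\tfrac{\alpha}{4}(a+b+2ab) - 8\alpha\cdot\tfrac12(\alpha a+ab)+4\alpha^2 a = \alpha a+\alpha b+2\alpha ab - 4\alpha^2 a - 4\alpha ab + 4\alpha^2 a = \alpha b - 2\alpha ab + \alpha a$, and since $a_0(b)=b+\alpha a - 2ab$ this is exactly $\alpha a_0(b) + (\alpha-\alpha^2)a$, which is~\eqref{a1/2^2}. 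Then $a_0(b)^2 = (b+\alpha a-2ab)^2$ expanded via~\eqref{2gen:easy-formulae} should collapse to $(1-\alpha)(b+\alpha a-2ab)=(1-\alpha)a_0(b)$, and similarly $a_0(b)a_{1/2}(b)=(b+\alpha a-2ab)\cdot 2(ab-\alpha a)$ should collapse to $\tfrac12(1-\alpha)a_{1/2}(b)$.

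The only genuine subtlety is the degenerate case: when $\alpha=1$ and $\sigma=0$, the algebra is $2$-dimensional with $ab=(a+b)/2$, forcing $a_{1/2}(b)=0$ and $a_0(b)=0$, and all three identities hold trivially ($0=0$); when $ab=0$, then $\alpha=0$, $a_0(b)=b$, $a_{1/2}(b)=0$, and again everything is immediate. So I expect the main obstacle to be purely bookkeeping — making sure the $\{a,b,ab\}$-expansions are carried out without sign or coefficient slips — rather than anything conceptual; the corollary formulas~\eqref{2gen:easy-formulae} do all the real work. I would present the computation of $a_{1/2}(b)^2$ in full and then indicate that~\eqref{a0^2} and~\eqref{a0*a1/2} follow by the identical mechanical substitution, or alternatively by reading off the eigen-components of $b^2=b$ once $a_{1/2}(b)^2$ is known.
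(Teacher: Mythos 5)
Your plan is correct and essentially identical to the paper's proof: both decompose $b=a_0(b)+a_{1/2}(b)+\alpha a$, read \eqref{a0*a1/2} off the $A_{1/2}(a)$-component of $b^2=b$, and get the remaining two identities from one direct expansion via \eqref{2gen:easy-formulae} combined with the component equations (the paper expands $a_0(b)^2$ directly and deduces \eqref{a1/2^2}; you expand $a_{1/2}(b)^2$ directly and deduce \eqref{a0^2} --- an immaterial permutation). One harmless slip in your aside: in the degenerate case $\alpha=1$, $\sigma=0$ one has $a_0(b)=0$ but $a_{1/2}(b)=b-a\neq0$; no case analysis is needed anyway, since the formulas \eqref{2gen:easy-formulae} hold unconditionally.
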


\begin{proof}
Denote $a_0=a_0(b)$ and $a_{1/2}=a_{1/2}(b)$.
We have $b = a_0 + a_{1/2} + \alpha a$.
Thus,
\begin{equation} \label{b^2=b}
a_0 + a_{1/2} + \alpha a
= b
= b^2
= (a_0 + a_{1/2} + \alpha a)^2
= a_0^2 + a_{1/2}^2 + \alpha^2 a
+ 2a_0 a_{1/2} + \alpha a_{1/2}.
\end{equation}
The equality of components on $A_{1/2}(a)$ gives~\eqref{a0*a1/2}.

Also, we have $a_0=b-2ab+\alpha a$. So,
$$
a_0^2
= b+4(ab)(ab)+\alpha^2 a-4(ab)b+2\alpha ab-4\alpha(ab)a.
$$
Applying the equalities~\eqref{2gen:easy-formulae}, we derive
$$
a_0^2 = (1-\alpha)b - 2(1-\alpha)ab + \alpha(1-\alpha)a = (1-\alpha)a_0,
$$
it is exactly~\eqref{a0^2}. 
Substituting~\eqref{a0^2} in~\eqref{b^2=b} and looking at 
summands in $A_0(a)\oplus A_1(a)$, we obtain~\eqref{a1/2^2}.
\end{proof}

\begin{proposition}[{\cite[Theorem 1]{3-gen}}]\label{prop:3-gen}
Let $A = \langle a,b,c\rangle_{\alg}$ be an axial algebra 
of Jordan type~$1/2$ generated by axes $a,b,c$. 
Then $A$ is a Jordan algebra and $\dim A\leq 9$.
\end{proposition}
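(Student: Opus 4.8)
The plan is to bound $\dim A$ first, by exhibiting an explicit finite spanning set, and then to check the Jordan identity by a bounded computation on that set.

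For the dimension bound, put $\alpha=(a,b)$, $\beta=(b,c)$, $\gamma=(c,a)$ and set
$$
W=\Span\{\,a,\ b,\ c,\ ab,\ bc,\ ca,\ (ab)c,\ (bc)a,\ (ca)b\,\}.
$$
By commutativity and Proposition~\ref{prop:2-gen} (each $2$-generated subalgebra $\langle x,y\rangle_{\alg}$ is spanned by $x,y,xy$), $W$ already contains every monomial of degree $\le 3$ in $a,b,c$. I would show that $W$ is a subalgebra; since it contains the three generators, this forces $W=A$, so $\dim A\le 9$. It suffices to verify $aw,bw,cw\in W$ for each of the nine listed elements $w$. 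The products contained in a $2$-generated subalgebra (those mentioning at most two of $a,b,c$) are handled by Proposition~\ref{prop:2-gen}, the identities~\eqref{2gen:easy-formulae}, and Lemma~\ref{help}. For the genuinely three-variable products — e.g.\ $a\big((bc)a\big)$, $(ab)(bc)$, $a\big((ca)b\big)$ — I would apply the Seress Lemma repeatedly: decomposing a third axis with respect to the first, say $c=a_0(c)+a_{1/2}(c)+\gamma a$ with $a_0(c)+\gamma a\in A_0(a)\oplus A_1(a)$, the Seress Lemma moves the outer multiplication by $a$ across a product, after which the $\mathbb Z_2$-grading in~\eqref{FusionRules} together with the quadratic relations~\eqref{a0^2}, \eqref{a1/2^2}, \eqref{a0*a1/2} lowers the degree. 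Iterating, every such product collapses into $W$.

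Once $W=A$, the algebra carries an explicit multiplication table on a spanning set of size at most nine, with structure constants rational in $\alpha,\beta,\gamma$. To see that $A$ is a Jordan algebra, note that the Jordan identity $(x^2y)x=x^2(yx)$ is a polynomial identity of bounded degree in a commutative algebra; since the characteristic of $F$ is not $2$ it follows from its linearization, a multilinear identity, and hence it is enough to verify it when the arguments range over the chosen spanning set. I would organize this by first checking it on $a,b,c$, where the fusion rules~\eqref{FusionRules} are exactly the Peirce multiplication rules and the identity is transparent, and then on the mixed inputs — a finite, if tedious, verification using the table. With $A$ known to be Jordan, the bound $\dim A\le 9$ from the previous step is the remaining claim; it is attained, for instance, by $M_3(F)^{+}$ generated by three sufficiently general rank-one idempotents.

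The technical heart of the argument is the spanning step: showing that the ``level-two'' products such as $(ab)(bc)$ and $a\big(b(ca)\big)$ genuinely reduce into $W$ rather than forcing new independent vectors. Keeping these under control needs a careful interplay of the Seress Lemma, the grading, and Lemma~\ref{help}, and along the way one must record the relations among $\alpha,\beta,\gamma$ that have to hold for the table to close consistently. By contrast, the Jordan-identity verification is conceptually routine once the multiplication table has been produced; an alternative route is to recognize the three pairwise $2$-generated subalgebras via Proposition~\ref{prop:2-gen} and identify $A$ with a subalgebra of a Clifford- or Hermitian-type Jordan algebra, but the spanning computation still seems unavoidable for the dimension bound.
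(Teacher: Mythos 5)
First, note that the paper does not prove this proposition at all: it is imported as Theorem~1 of the cited reference \cite{3-gen}, where the argument is a substantial (partly computer-assisted) construction of the universal $3$-generated algebra over a polynomial ring in \emph{four} parameters. Your overall strategy --- exhibit a nine-element spanning set, close the multiplication table, then verify the Jordan identity on it --- is the same in spirit as that reference, but as written it has concrete gaps.

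The main one is the claim that ``it suffices to verify $aw,bw,cw\in W$ for each of the nine listed elements $w$.'' In a nonassociative algebra, closure of $W$ under multiplication by the generators only shows that $W$ contains all left-normed monomials in $a,b,c$; it does not show that $W$ is a subalgebra, and hence does not give $W=A$. You must also reduce the remaining pairwise products of non-generator elements, such as $(ab)(bc)$ and $\bigl((ab)c\bigr)\bigl((bc)a\bigr)$ --- indeed you list $(ab)(bc)$ among the products to be treated, which contradicts your own sufficiency criterion. Second, the structure constants of the closed table are not rational functions of $\alpha,\beta,\gamma$ alone: the universal $3$-generated algebra depends on a fourth parameter $\phi$, essentially $(ab,c)$, which appears explicitly in the paper's Example~1; a table written only in $\alpha,\beta,\gamma$ cannot close consistently. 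Third, deducing the Jordan identity from its full linearization checked on basis elements requires characteristic different from $3$ as well as from $2$ (the identity has degree $3$ in $x$); to cover characteristic $3$ one must instead verify $(x^2y)x=x^2(yx)$ symbolically for generic $x,y$ with indeterminate coordinates. None of these defects is fatal to the method --- they are precisely the points the cited reference handles by working with the universal object and machine computation --- but as it stands the proposal is a plan rather than a proof, and its two explicit logical claims (sufficiency of generator products, and dependence on three parameters only) are false.
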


\begin{lemma} \label{lem:unit}
Let $A$ be an axial algebra of Jordan type $1/2$ with unit~$e$. 
Then $(e,a) = 1$ for every axis~$a$.
\end{lemma}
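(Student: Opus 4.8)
The plan is to exploit the Frobenius form together with the normalization property $(a,a)=1$ for axes. Since $A$ has a unit $e$, invariance of the form gives $(e,a) = (e\cdot e, a) = (e, e\cdot a) = (e,a)$, which is tautological and not directly useful; instead I would use invariance to relate $(e,a)$ to $(a,a)$. First I would write the Peirce decomposition of $e$ with respect to the axis $a$, namely $e = a_0(e) + a_{1/2}(e) + \alpha(e)a$ with $\alpha(e) = (a,e)$, and observe that because $e$ is the unit, $ae = a$, so the $A_{1/2}(a)$- and $A_0(a)$-components are forced: $a_{1/2}(e) = 0$ and $a_0(e)\cdot\text{(nothing)}$ — more precisely, $ae = \tfrac12 a_{1/2}(e) + \alpha(e)a = a$ forces $a_{1/2}(e)=0$ and $\alpha(e)=1$. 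That already gives $(a,e) = \alpha(e) = 1$, which is exactly the claim.

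Let me make the key step precise. The element $e$ decomposes as $e = a_0(e) + a_{1/2}(e) + (a,e)\,a$ where $a_0(e)\in A_0(a)$ and $a_{1/2}(e)\in A_{1/2}(a)$. Multiplying by the axis $a$ and using $A_0(a)\cdot a = 0$, $a\cdot A_{1/2}(a) = \tfrac12 A_{1/2}(a)$, and $a^2 = a$, we get
\[
a = ea = a\cdot a_0(e) + a\cdot a_{1/2}(e) + (a,e)\,a^2 = \tfrac12\,a_{1/2}(e) + (a,e)\,a.
\]
Comparing components in the eigenspace decomposition $A = A_0(a)\oplus A_{1/2}(a)\oplus A_1(a)$ (using that $A_1(a)=\Span\{a\}$ by primitivity), the $A_{1/2}(a)$-part yields $\tfrac12 a_{1/2}(e) = 0$, so $a_{1/2}(e)=0$, and the $A_1(a)$-part yields $(a,e)\,a = a$, hence $(a,e)=1$.

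I expect there is essentially no obstacle here — the argument is a direct consequence of primitivity together with the eigenspace structure of $\ad_a$. The only point requiring a small amount of care is the bookkeeping of components: one must invoke that $a$ is a \emph{primitive} axis so that the $A_1(a)$-component of any element is a well-defined scalar multiple of $a$, and that $A_0(a)$, $A_{1/2}(a)$, $A_1(a)$ are independent as subspaces (which follows from $\ad_a$ being semisimple with spectrum in $\{0,1/2,1\}$). Notably, this proof does not even use the Frobenius form beyond the definition of $(a,e)$ as the $A_1(a)$-coordinate of $e$; alternatively, one could phrase it purely in terms of the form by writing $(a,e) = (a\cdot e, e) = (a, e\cdot e) = (a,e)$ — but the eigenspace argument above is cleaner and self-contained.
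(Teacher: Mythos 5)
Your argument is correct, but it takes a different route from the paper's. The paper's proof is a one-line application of invariance of the Frobenius form applied to $a=a^2$ rather than to $e=e^2$:
$$
(e,a) = (e,a^2) = (ea,a) = (a,a) = 1.
$$
You dismissed the form-based approach as tautological because you only tried $(a,e)=(a\cdot e,e)=(a,e\cdot e)$, i.e.\ you expanded the unit as a square instead of expanding the axis as a square; had you written $(e,a)=(e,a\cdot a)$ you would have landed on the paper's computation. Your eigenspace argument works instead with the Peirce decomposition $e=a_0(e)+a_{1/2}(e)+\lambda a$ and the comparison of components in $ea=a$, which gives $\lambda=1$ and, as a bonus, $a_{1/2}(e)=0$, i.e.\ $e\in A_0(a)\oplus A_1(a)$ --- a slightly stronger conclusion than the statement asks for. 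One small caveat: your closing remark that the proof ``does not use the Frobenius form beyond the definition of $(a,e)$ as the $A_1(a)$-coordinate'' understates what is being invoked. The equality of the $A_1(a)$-coordinate of $e$ with $(a,e)$ is not a definition but a consequence of the orthogonality of distinct eigenspaces under the Frobenius form together with $(a,a)=1$ (the paper states this identification only for axes $b$, and you are extending it to the unit $e$, which is legitimate but uses exactly those facts). So both proofs ultimately rest on properties of the Frobenius form; the paper's is shorter, while yours yields the extra structural information about where $e$ sits relative to $a$.
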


\begin{proof}
It follows from the properties of the Frobenius form,
$$
(e,a)
= (e,a^2)
= (ea,a)
= (a,a) 
= 1. \qedhere
$$
\end{proof}

\begin{lemma} \label{lem:radA0}
Let $A$ be an axial algebra of Jordan type $1/2$,
and let $a$ be an axis in $A$. 
Suppose that $A_0(a)$ is an axial algebra of Jordan type $1/2$,
then $R(A_0(a)) = R(A)\cap A_0(a)$.
In particular, $R(A_0(a)) = (0)$ when $A$ is semisimple.
\end{lemma}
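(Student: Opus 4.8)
The plan is to reduce the statement to the single fact that the Frobenius form of the axial algebra $A_0(a)$ is the restriction to $A_0(a)$ of the Frobenius form $(\cdot,\cdot)$ of $A$; once this is known, the two descriptions of the radical match up immediately. I first record two routine points: $A_0(a)$ is a subalgebra of $A$ by the fusion rules \eqref{FusionRules}, and $A = A_1(a)\oplus A_{1/2}(a)\oplus A_0(a)$ is a decomposition into subspaces that are pairwise orthogonal with respect to $(\cdot,\cdot)$.

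Granting the italicized claim, the proof finishes quickly. For $x\in A_0(a)$ write an arbitrary $y\in A$ as $y = y_0 + y_{1/2} + \lambda a$ with $y_0\in A_0(a)$, $y_{1/2}\in A_{1/2}(a)$, $\lambda\in F$; orthogonality of distinct Peirce components gives $(x,y) = (x,y_0)$. Hence $x$ is orthogonal to all of $A$ precisely when it is orthogonal to all of $A_0(a)$, so $A^{\perp}\cap A_0(a)$ is exactly the radical of $(\cdot,\cdot)|_{A_0(a)}$. Since $(\cdot,\cdot)|_{A_0(a)}$ is the Frobenius form of $A_0(a)$, this radical is $R(A_0(a))$, and since $R(A) = A^{\perp}$ we obtain $R(A_0(a)) = R(A)\cap A_0(a)$. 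When $A$ is semisimple, $R(A) = (0)$ forces $R(A)\cap A_0(a) = (0)$, which is the last assertion.

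It remains to justify that $(\cdot,\cdot)|_{A_0(a)}$ is the Frobenius form of $A_0(a)$. It is certainly a symmetric bilinear invariant form on $A_0(a)$, so by the uniqueness of the Frobenius form it is enough to check that it is nonzero and equals $1$ on every axis of $A_0(a)$; both follow once we show that every axis $b$ of $A_0(a)$ is an axis of $A$ (then $(b,b) = 1$ by the defining property of $(\cdot,\cdot)$). Here $b\in A_0(a)$ means $ab = 0$, so the Seress Lemma applied with $z = b$ gives $a(xb) = (ax)b$ for all $x\in A$; thus $\ad_a$ and $\ad_b$ commute, so $\ad_b$ preserves each of $A_1(a) = \Span\{a\}$, $A_{1/2}(a)$, $A_0(a)$. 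On $\Span\{a\}$ it is zero, and on $A_0(a)$ it is semisimple with $\Spec(\ad_b)\subset\{0,1/2,1\}$ because $b$ is an axis of $A_0(a)$; so $b$ will be an axis of $A$ once one knows that $\ad_b$ is semisimple on $A_{1/2}(a)$ with spectrum in $\{0,1/2,1\}$ and with no fixed vectors there (which yields $A_1(b) = \Span\{b\}$). Since $A$ is spanned by its axes, $A_{1/2}(a)$ is spanned by the $A_{1/2}(a)$-components of axes of $A$, and the idea is to read off this action of $\ad_b$ from the Peirce decompositions inside three-generated Jordan subalgebras (Proposition \ref{prop:3-gen}) built from $a$, $b$ and such axes, comparing $\ad_b$ with the idempotent $a+b$ to rule out the eigenvalue $1$ on $A_{1/2}(a)$.

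I expect this last step to be the main obstacle: $b$ is a priori only an axis of $A_0(a)$, so applying Proposition \ref{prop:3-gen} requires first upgrading the status of $b$ (e.g. bootstrapping from the subalgebra $\langle a,b\rangle_{\alg} = \Span\{a,b\}$), and organizing enough Jordan subalgebras that simultaneously contain $b$ and a spanning set of $A_{1/2}(a)$ is the only non-formal part — everything else is bookkeeping with the Peirce orthogonality of the Frobenius form. In the settings where the lemma is actually applied this difficulty disappears, since there $A_0(a)$ is generated by idempotents already known to be axes of $A$, so $(\cdot,\cdot)|_{A_0(a)}$ is nonzero and normalized on a generating set of axes and hence equals the Frobenius form of $A_0(a)$ by uniqueness.
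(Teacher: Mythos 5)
Your argument is, in its essential mechanics, the same as the paper's: identify the restriction of the Frobenius form of $A$ to $A_0(a)$ with the Frobenius form of $A_0(a)$, and then read off both inclusions from the orthogonality of the Peirce components $A_0(a)$, $A_{1/2}(a)$, $A_1(a)$. The second half of your proof (the orthogonality bookkeeping giving $R(A_0(a)) = A^{\perp}\cap A_0(a) = R(A)\cap A_0(a)$) is complete, correct, and coincides with what the paper does.

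The difference lies in the first half, and there your write-up is honest but unfinished. The paper disposes of the identification of the two forms in one sentence by appealing to uniqueness of the Frobenius form. You correctly observe that invoking uniqueness requires knowing that the restricted form is nonzero and equal to $1$ on a generating set of axes of $A_0(a)$, and that these are a priori axes only of the subalgebra, not of $A$. Your proposed repair --- proving that every axis $b$ of $A_0(a)$ is an axis of $A$ --- is left as a sketch exactly at the hard point: controlling $\ad_b$ on $A_{1/2}(a)$, where Proposition~\ref{prop:3-gen} cannot be applied because $b$ is not yet known to be an axis of $A$. So, as written, your proposal has a gap precisely where the paper is tersest. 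Your closing observation is the correct way out and is in fact how the lemma is used: wherever it is invoked (Theorem~\ref{Thm:FD2Unit} via Corollary~\ref{A0-QDBasis} and Lemma~\ref{bazis}), $A_0(a)$ is generated by elements $x_a(y)$ that are axes of $A$ by Proposition~\ref{AxA0}, so the restricted form is normalized on a generating set of axes and uniqueness applies without the harder general claim. If you state that additional hypothesis explicitly (or restrict to that situation), your proof closes; the general assertion that every axis of $A_0(a)$ is an axis of $A$ is not needed and is not established by your sketch.
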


\begin{proof}
Since the Frobenius form is defined on an axial algebra uniquely,
the restriction of the Frobenius form from $A$ on $A_0(a)$ coincides with the one defined on $A_0(a)$. 
The inclusion $R(A)\cap A_0(a)\subseteq R(A_0(a))$ is trivial.
Suppose that $x\in R(A_0(a)) = (A_0(a))^\perp$, then $x\in R(A)$, since
both $A_{1/2}(a)$ and $A_1(a)$ are orthogonal to $A_0(a)$ in~$A$.
\end{proof}

\section{Definite and quasi-definite axial algebras}

\begin{definition}
Given an axial algebra~$A$ of Jordan type and a Frobenius form $(\cdot,\cdot)$ on it,
we call $A$ anisotropic if $(x,x)=0$ implies $x=0$.
\end{definition}

Thus, an anisotropic axial algebra is semisimple.

\begin{definition}
We call an anisotropic axial algebra of Jordan type $1/2$ {\bf definite}.
\end{definition}

\begin{definition}
We call an axial algebra~$A$ of Jordan type $1/2$ {\bf quasi-definite},
if we have $(a,b)\neq1$ for every pair of distinct axes $a,b\in A$.
\end{definition}

\begin{definition}
Let $X$ be a basis of an axial algebra $A$ of Jordan type $1/2$ consisting of axes. 
We say that $X$ is a~quasi-definite basis of $A$ 
if $(x_i,x_j)\neq1$ for distinct $x_i,x_j\in X$. 
\end{definition}

The following lemma provides a~sufficient condition, 
under which an axial algebra is quasi-definite.

\begin{lemma}\label{an}
Every definite axial algebra of Jordan type $1/2$ is quasi-definite.
\end{lemma}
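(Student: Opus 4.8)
The plan is to take two axes $a,b\in A$ with $(a,b)=1$ and to deduce $a=b$ directly, using only bilinearity and symmetry of the Frobenius form together with the normalization $(x,x)=1$ that holds for every axis $x\in A$ (recalled in \S2).

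First I would consider the element $c=a-b$ and expand $(c,c)=(a,a)-(a,b)-(b,a)+(b,b)$. By symmetry of the form, $(a,b)=(b,a)=1$, and $(a,a)=(b,b)=1$ since $a$ and $b$ are axes, so $(c,c)=1-1-1+1=0$. Since $A$ is definite it is, by definition, anisotropic, hence $(c,c)=0$ forces $c=0$, i.e. $a=b$. Therefore distinct axes of $A$ cannot have Frobenius product equal to $1$, which is precisely the statement that $A$ is quasi-definite.

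The hard part will be essentially nonexistent here: the argument is a one-line computation. The only things to keep in mind are that the Frobenius form is symmetric and normalized to $1$ on axes, and the purely formal remark (already made in the text after the definition of ``definite'') that anisotropy implies semisimplicity, though that is not needed for this lemma.
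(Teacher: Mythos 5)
Your proof is correct, and it is genuinely different from (and more elementary than) the one in the paper. You only use bilinearity and symmetry of the Frobenius form, the normalization $(a,a)=(b,b)=1$ for axes, and anisotropy applied to $a-b$; the computation $(a-b,a-b)=1-1-1+1=0$ immediately forces $a=b$. The paper instead writes $b=a_0(b)+a_{1/2}(b)+\alpha a$ in the Peirce decomposition with respect to $a$, uses the product formulas of Lemma~\ref{help} to get $a_{1/2}(b)^2=a_0(b)$ when $\alpha=1$, deduces $(a_{1/2}(b),a_{1/2}(b))=2(a_0(b),a)=0$ by invariance and orthogonality of distinct eigenspaces, and then applies anisotropy twice to kill $a_{1/2}(b)$ and $a_0(b)$. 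Your route is shorter and needs none of the structure theory of \S2 beyond the definition of the form; the paper's route has the side benefit of locating exactly which isotropic vector obstructs quasi-definiteness in the non-anisotropic case (namely the $1/2$-component $a_{1/2}(b)$), which is the kind of information the authors reuse elsewhere, but for the lemma as stated your one-line argument is fully sufficient.
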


\begin{proof}
Denote by $A$ a~definite axial algebra of Jordan type $1/2$ 
and consider its distinct axes $a$ and~$b$.
Suppose that $(a,b)=1$.
Then by Lemma~\ref{help}, $a_{1/2}^2 = a_0$.
Thus, $(a_{1/2},a_{1/2}) = 2(a_{1/2},a_{1/2}a) = 2(a_{1/2}^2,a) 
= 2(a_0,a) = 0$.
Since $A$ is anisotropic, $a_{1/2} = 0$ and so, $a_0 = 0$.
Hence, $a = b$, a contradiction.
\end{proof}

Below we have an example of quasi-definite 
but not definite axial algebra (see~\cite{HRS}).

Let us show that a unital 3-dimensional 2-generated axial algebra~$A$ 
of Jordan type $1/2$ over a quadratically closed field~$F$ 
(it means that the equation $x^2 - a = 0$ has solutions for all $a\in F$)
of characteristic not two is quasi-definite. 
It is known that $A$ is isomorphic 
to the simple Jordan algebra of a~symmetric bilinear
non-degenerate form $f$ defined on a two-dimensional vector space~$V$,
i.\,e., $A = F1\oplus V$ with the product
\begin{equation} \label{SpinProduct}
(\alpha1+x)(\beta1+y)
= (\alpha\beta + f(x,y))1 + \alpha y + \beta x,
\end{equation}
where $\alpha,\beta\in F$ and $x,y\in V$.

Let us show that over $F$ the axial algebra $A$ is unique (up to isomorphism).
One may diagonalize a quadratic form over any field. 
Thus, there exist $e_1,e_2\in V$ such that $f(e_i,e_i) = d_i$ and $f(e_1,e_2) = 0$. 
Since $f$ is non-degenerate, we have $d_1,d_2\neq0$.
Define $u = e_1/\sqrt{d_1}$ and $v = e_2/\sqrt{d_2}$. 
Then, in the basis $1,u,v$, we have the following multiplication table:
$$
1^2 = 1, \quad
1\cdot u = u,\quad
1\cdot v = v,\quad
u^2 = v^2 = 1, \quad
uv = 0.
$$
Therefore, we get a~unique algebraic structure.
Moreover, we have a Frobenius form
$$
(1,1) = (u,u) = (v,v) = 2,\quad
(1,u) = (1,v) = (u,v) = 0.
$$

Firstly, we find all idempotents in $A$.
Let $e = \alpha1+\beta u+\gamma v$, then 
$e^2=e$ is equivalent to the relations
$$
\alpha^2 + \beta^2 + \gamma^2 = \alpha,\quad
2\alpha\beta = \beta, \quad
2\alpha\gamma = \gamma.
$$
If $\alpha=0$, then $\beta=\gamma=0$ and $e=0$.
Otherwise, either $e = 1$ or $\alpha=1/2$ and $\beta^2 + \gamma^2=1/4$.

Let us check that a non-unital idempotent $e$ is a primitive one.
Indeed, $A_0(e) = \Span\{1/2-(\beta u+\gamma v)\}$,
$A_{1/2}(e) = \Span\{-\gamma u + \beta v\}$, and $A_1(e) = \Span\{e\}$.

Note that the definition of the Frobenius form is correct, since
for every primitive idempotent~$e$, we have
$$
(e,e)
= (1/2+\beta u+\gamma v,1/2+\beta u+\gamma v)
= 1/2 + 2(\beta^2+\gamma^2) = 1/2+1/2=1.
$$

Suppose that $(e,e') = 1$, where $e = 1/2+\beta u+\gamma v$
and $e' = 1/2+\delta u+\varepsilon v$ are distinct axes. 
Then we check that
$$
(e,e')
= (1/2+\beta u+\gamma v,1/2+\delta u+\varepsilon v)
= 1/2 + 2(\beta\delta+\gamma\varepsilon) = 1,
$$
so, $\beta\delta+\gamma\varepsilon = 1/4$.
Thus, we have
$\begin{pmatrix}
\beta & \gamma \\
\delta & \varepsilon \\
\end{pmatrix}w
= \begin{pmatrix}
0 \\
0
\end{pmatrix}$, 
where $w = (\beta-\delta,\gamma-\varepsilon)^T$
in the basis $u,v$. 
If $e\neq e'$, then
the matrix is degenerate.
Suppose that
$\delta u+\varepsilon v = k(\beta u+\gamma v)$ for some $k\in F$.
Therefore,
$$
1/4
= \beta\delta+\gamma\varepsilon
=k(\beta^2+\gamma^2) = k/4.
$$
It means that $k=1$ and $e = e'$.

However, $A$ is not definite, e.\,g.,
$(1+iu,1+iu) = 0$.

Now, consider the case of $(n+1)$-dimensional Jordan algebra 
$J = F1\oplus V$
of Clifford type (also known as a spin factor) 
with the product defined by~\eqref{SpinProduct}.
Let us restrict the conditions as follows, we assume that $F = \mathbb{R}$
and the form $f$ is positive-definite. 
Then we may find a~linear basis $v_1,\ldots,v_n$ of~$V$ such that 
the Frobenius form satisfies
$(1,1) = (v_1,v_1) = \ldots = (v_n,v_n) = 2$ and 
$(1,v_i) = (v_i,v_j) = 0$ for all $i,j\in\{1,\ldots,n\}$ and $i\neq j$.
Thus, $a_i = (1+v_i)/2$, $i=1,\ldots,n$, are axes generating~$J$
and satisfying $(a_i,a_i) = 1$.
For every $x\in J$, we have a representation 
$x = \alpha1 + \sum\limits_{i=1}^n \beta_i v_i$.
Then 
$(x,x)/2 = \alpha^2 + \sum\limits_{i=1}^n \beta_i^2$.
If $(x,x) = 0$, then $\alpha = \beta_1 = \ldots = \beta_n = 0$ and $x = 0$.
Hence, $J$ is a definite axial algebra of Jordan type~$1/2$.
Note that there is no a contradiction to the previous example, 
since the field $\mathbb{R}$ is not quadratically closed.

Denote by $M_n^{(+)}(F)$ the matrix algebra $M_n(F)$ 
under the product $A\circ B = (AB + BA)/2$.
It is well-known that $M_n^{(+)}(F)$ is a simple Jordan algebra.
Thus, it is an axial algebra of Jordan type~$1/2$.
Moreover, its Frobenius form coincides with the trace one, i.\,e.
$(X,Y) = \tr(X\circ Y) = \tr(XY)$ for all $X,Y\in M_n^{(+)}(F)$.

The matrix unity is denoted by $e_{ij}$, $1\leq i,j\leq n$.

\begin{lemma} \label{lem:axisInMatrix}
A matrix $X\in M_n^{(+)}(F)$ is a primitive axis 
if and only if $X^2 = X$ and $\rank X = 1$.
\end{lemma}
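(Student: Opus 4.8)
The plan is to reduce everything to a block-matrix computation after putting $X$ in a normal form. First, since $X\circ X = (X^2+X^2)/2 = X^2$, a matrix $X$ is an idempotent of the Jordan algebra $M_n^{(+)}(F)$ if and only if $X^2 = X$ as an ordinary matrix. So assume $X^2=X$, $X\ne 0$, and set $r = \rank X \geq 1$. The minimal polynomial of $X$ divides $t(t-1)$, which has distinct roots, so $X$ is diagonalizable over $F$ and there is an invertible $P\in M_n(F)$ with $PXP^{-1} = \operatorname{diag}(I_r,0)$. The map $Y\mapsto PYP^{-1}$ respects ordinary matrix multiplication, hence it respects $\circ$, so it is an automorphism of $M_n^{(+)}(F)$; it preserves the properties ``axis'' and ``primitive'' as well as the rank. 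Therefore I may assume $X = \operatorname{diag}(I_r,0)$.

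Next, writing $Y\in M_n^{(+)}(F)$ in block form with the top-left block of size $r\times r$, a direct computation gives
\[
Y=\begin{pmatrix} Y_{11}&Y_{12}\\ Y_{21}&Y_{22}\end{pmatrix},\qquad
\ad_X(Y)=X\circ Y=\begin{pmatrix} Y_{11}&\tfrac12 Y_{12}\\[2pt] \tfrac12 Y_{21}& 0\end{pmatrix}.
\]
Reading off the blocks: $A_1(X)$ is the space of matrices supported in the $(1,1)$-block (so $\dim A_1(X)=r^2$ and $X\in A_1(X)$); $A_{1/2}(X)$ is the space of matrices supported in the two off-diagonal blocks ($\dim = 2r(n-r)$); and $A_0(X)$ is the space of matrices supported in the $(2,2)$-block ($\dim = (n-r)^2$). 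Since $r^2 + 2r(n-r) + (n-r)^2 = n^2$, these three eigenspaces span $M_n^{(+)}(F)$, so $\ad_X$ is semisimple with $\Spec(\ad_X)\subseteq\{0,1/2,1\}$ — and these three scalars are pairwise distinct because $\operatorname{char}F\ne 2$. Hence every nonzero idempotent matrix is an axis of $M_n^{(+)}(F)$ of Jordan type $1/2$ (the fusion rules hold automatically, $M_n^{(+)}(F)$ being a Jordan algebra).

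Finally, $X$ is primitive exactly when $A_1(X)=\Span\{X\}$, i.e. when $\dim A_1(X)=r^2=1$, i.e. when $r=1$; this is the asserted equivalence. I expect no essential difficulty here: the only points requiring care are the block computation of $\ad_X$ and the observation that conjugation by an invertible matrix is a Jordan automorphism, with the remainder being a dimension count. One should also keep in mind that being an axis already builds in semisimplicity of $\ad_X$, which in this setting comes for free since idempotent matrices are diagonalizable.
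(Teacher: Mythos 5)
Your proof is correct and follows essentially the same route as the paper: conjugate the idempotent to $\mathrm{diag}(I_r,0)$, compute the eigenspaces of $\ad_X$ in block form, and conclude that primitivity forces $r=1$. Your identification of $A_1(X)$ as the full $r^2$-dimensional top-left block is in fact more precise than the span written in the paper's proof, and your extra verification of semisimplicity via the dimension count is a harmless elaboration of what the paper leaves implicit.
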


\begin{proof}
Let $X$ be an idempotent matrix from $A = M_n^{(+)}(F)$.
Then $X$ is conjugate to the matrix $e_{11}+\ldots+e_{rr}$, 
where $r = \rank X$.
Note that 
$A_1(X) = \{S\in M_n^{(+)}(F)\mid X\circ S = S\} 
= \Span\{e_{11},\ldots,e_{rr}\}$.
So, $X$ is primitive if and only if $r = 1$.
\end{proof}

For $n\geq2$, $M_n^{(+)}(F)$ is not quasi-definite.
Indeed, consider primitive axes $X = e_{11}$ and $Y = e_{11} + e_{12}$,
then $(X,Y) = 1$. On the other hand, 
$M_n^{(+)}(F)$ has a~quasi-definite basis of axes.

Given $a = (a_1,\ldots,a_n)\in F_n$ and $b = (b_1,\ldots,b_n)\in F_n$,
introduce $\langle a,b\rangle = \sum\limits_{j=1}^n a_jb_j$.

\begin{proposition}
Let $F$ be an infinite field. Then
$M_n^{(+)}(F)$ is an axial algebra of Jordan type~$1/2$
which has a quasi-definite basis of axes.
\end{proposition}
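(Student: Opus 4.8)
The plan is to realise the primitive axes of $A=M_n^{(+)}(F)$ as normalised outer products of points on the moment curve and then to extract a basis out of this family by a single genericity argument over the infinite field $F$; we may assume $n\ge 2$, the case $n=1$ being trivial. By Lemma~\ref{lem:axisInMatrix}, the primitive axes of $A$ are precisely the rank-one idempotent matrices, and every such matrix has the form $vw^T$ with $\langle w,v\rangle=1$; since the Frobenius form on $A$ is $(X,Y)=\tr(XY)$, for two such axes one gets $(vw^T,pq^T)=\tr(vw^Tpq^T)=\langle w,p\rangle\langle q,v\rangle$. For $x\in F$ put $u(x)=(1,x,x^2,\dots,x^{n-1})^T\in F_n$ and $\phi(z)=1+z+\dots+z^{n-1}$, so that $\langle u(x),u(y)\rangle=\phi(xy)$. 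Whenever $\phi(st)\ne0$ I would set
$$
X(s,t)=\frac{u(s)\,u(t)^T}{\phi(st)} .
$$
A direct check gives $X(s,t)^2=X(s,t)$ and $\rank X(s,t)=1$, so $X(s,t)$ is a primitive axis, and the formula for the Frobenius form yields
$$
\bigl(X(s,t),X(s',t')\bigr)=\frac{\phi(s't)\,\phi(st')}{\phi(st)\,\phi(s't')} .
$$

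The crux is a non-vanishing statement. The $(i+1,j+1)$-entry of $u(s)u(t)^T$ is $s^it^j$, so a matrix pairing to zero (via the trace form) with every $u(s)u(t)^T$ would produce a bivariate polynomial vanishing identically on $F^2$, hence the zero matrix; therefore $\{u(s)u(t)^T:s,t\in F\}$ spans $M_n(F)$. Writing $\Delta(s_1,t_1,\dots,s_{n^2},t_{n^2})$ for the $n^2\times n^2$ determinant whose $k$-th column lists the entries of $u(s_k)u(t_k)^T$, it follows that $\Delta$ is not the zero polynomial. I would then form the product of $\Delta$, of $\prod_{k}\phi(s_kt_k)$, and of $\prod_{i<j}\bigl(\phi(s_jt_i)\phi(s_it_j)-\phi(s_it_i)\phi(s_jt_j)\bigr)$. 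Each factor is a nonzero polynomial in the $2n^2$ variables: $\phi(s_kt_k)$ equals $1$ at $s_k=0$, and the $(i,j)$-factor of the last product specialises, at $s_i=t_i=0$, to $1-\phi(s_jt_j)$, which is non-constant because $\deg\phi=n-1\ge1$. Hence the whole product is a nonzero polynomial, and since $F$ is infinite there is a point $(s_k,t_k)_{k=1}^{n^2}$ at which it does not vanish.

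At such a point all $\phi(s_kt_k)\ne0$, so the axes $X_k:=X(s_k,t_k)$ are defined; since the $k$-th column of the determinant built from the $X_k$ is $\phi(s_kt_k)^{-1}$ times the corresponding column for the $u(s_k)u(t_k)^T$, that determinant equals $\Delta\,/\,\prod_k\phi(s_kt_k)\ne0$, so $\{X_1,\dots,X_{n^2}\}$ is a basis of $A$, and in particular the $X_k$ are pairwise distinct. Finally, the last factor being nonzero says exactly that $(X_i,X_j)\ne1$ for $i\ne j$. Thus $\{X_1,\dots,X_{n^2}\}$ is a quasi-definite basis of $A$ consisting of axes.

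I expect the only real obstacle to be the choice of the family $X(s,t)$ itself: it must genuinely be two-parameter (a one-parameter family of rank-one idempotents cannot span $M_n(F)$), it must be explicit enough that the spanning property is visible, and the induced Frobenius form must have a transparent expression. Once the moment-curve parametrisation is in place, each of the three requirements — spanning, distinctness, and the inequality $(X_i,X_j)\ne1$ — reduces to checking that an explicit polynomial is not identically zero, the mildly delicate one being the quasi-definiteness polynomial $\phi(s_jt_i)\phi(s_it_j)-\phi(s_it_i)\phi(s_jt_j)$.
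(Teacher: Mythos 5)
Your argument is correct, and it takes a genuinely different route from the paper's. The paper proceeds by induction on $n$: given a quasi-definite basis $H_1,\ldots,H_{n^2}$ of $M_n^{(+)}(F)$, it adjoins $2n+1$ explicit rank-one idempotents supported on the last row and column, namely $e_{n+1\,n+1}$ together with two matrices $F_i(b_i,d_i)$ and $F_i(c_i,d_i)$ for each $i$, and then chooses the parameters $b_i,c_i,d_i$ to avoid the finitely many values that would destroy linear independence or force some pairing to equal~$1$. You replace the induction by a single genericity argument: the moment-curve parametrization $X(s,t)=u(s)u(t)^T/\phi(st)$ produces a two-parameter algebraic family of primitive axes (primitivity via Lemma~\ref{lem:axisInMatrix}), and both the basis condition and the conditions $(X_i,X_j)\neq 1$ become the non-vanishing of one explicit polynomial in the $2n^2$ parameters, which over an infinite field always has a non-vanishing point. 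Your individual verifications are sound: $\Delta\not\equiv 0$ follows from the spanning of $\{u(s)u(t)^T\}$ together with nondegeneracy of the trace form, $\phi(s_kt_k)$ is $1$ at $s_k=0$, the $(i,j)$-factor specializes at $s_i=t_i=0$ to $1-\phi(s_jt_j)$, which is non-constant for $n\geq 2$, and the computation $(X(s,t),X(s',t'))=\phi(s't)\phi(st')/(\phi(st)\phi(s't'))$ is right. What your approach buys is the elimination of the inductive bookkeeping and a clean ``a generic choice works'' statement; what the paper's buys is a completely explicit basis with a transparent block structure. Both arguments use the infinitude of $F$ in an essential and essentially identical way, namely to dodge finitely many polynomial conditions.
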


\begin{proof}
We prove the statement by induction on~$n$.
For $n = 1$, we take the matrix $e_{11}$ as the required basis.

Suppose that we have found quasi-definite basis for $M_n^{(+)}(F)$
consisting of matrices $H_1,\ldots,H_{n^2}$.
By Lemma~\ref{lem:axisInMatrix}, since $\rank H_i = 1$, 
we may present $H_i = l_i^T r_i$, where 
$l_i = (l_{i1},\ldots,l_{in})$ and
$r_i = (r_{i1},\ldots,r_{in})$ are vectors from $F_n$.
Then $H_1,\ldots,H_{n^2}$ form a~quasi-definite basis of axes
if and only if 
$$
\langle l_i,r_i\rangle = 1,\ i=1,\ldots,n^2, \quad
\langle l_i,r_j\rangle\langle l_j,r_i\rangle \neq 1, \ 
i\neq j,\ i,j\in\{1,\ldots,n^2\}.
$$

Now consider the algebra $A = M_{n+1}^{(+)}(F)$.
We identify matrices $H_i$ with their images in $A$ under the embedding
$\psi\colon M_n^{(+)}(F)\to M_{n+1}^{(+)}(F)$ defined as follows,
$\psi(e_{ij}) = e_{ij}$, $i,j=1,\ldots,n$.

Introduce matrices 
$$
F_i(b_i,d_i) 
= (1-d_i)e_{ii} + b_i e_{i\,n+1}
+ \frac{d_i(1-d_i)}{b_i}e_{n+1\,i} + d_i e_{n+1\,n+1},\quad i=1,\ldots,n. 
$$
By Lemma~\ref{lem:axisInMatrix}, they are all primitive axes.

We want to find $d_i,b_i,c_i\in F$ for $i=1,\ldots,n$ such that
$$
S = \{H_1,\ldots,H_{n^2},F_1(b_1,d_1),F_1(c_1,d_1),
\ldots,F_n(b_n,d_n),F_n(c_n,d_n),e_{n+1\,n+1}\}
$$
is a required quasi-definite basis of $A$. 

For each $i=1,\ldots,n$, we take nonzero $b_i,c_i$ such that 
$b_i\neq\pm c_i$ and $d_i\neq0,1$.
The inductive hypothesis implies that the set~$S$ 
of primitive idempotents forms a~basis of~$A$.

It remains to check that $(X,Y)\neq1$ for all $X,Y\in S$, $X\neq Y$.
By the inductive hypothesis $(H_i,H_j)\neq1$ when $i\neq j$.
Note that $(H_i,e_{n+1\,n+1}) = 0$.
We have $(F_i(b_i,d_i),e_{n+1\,n+1}) 
= (F_i(c_i,d_i),e_{n+1\,n+1}) = d_i\neq1$.

Further, we have
$$
(H_i,F_j(x,d_j))
= (1-d_j)l_{ij}r_{ij},
$$
where $x = b_i$ or $x = c_i$,
for all $i=1,\ldots,n^2$ and $j=1,\ldots,n$.
Thus, we take $d_j$ such that 
$1/(1-d_j)\not\in K_j = \{l_{ij}r_{ij}\mid i=1,\ldots,n^2\}$.

Now, we compute
$$
(F_i(x,d_i),F_j(y,d_j))
= d_i d_j,\quad i\neq j,
$$ 
where $x \in \{b_i,c_i\}$ and $y\in \{b_j,c_j\}$.
Thus, we take $d_k$ in such way that $d_i d_j\neq 1$ 
for all $i,j=1,\ldots,n$.

Finally, we get for $i=1,\ldots,n$,
\begin{multline*}
(F_i(b_i,d_i),F_i(c_i,d_i))
= \tr\left( 
\begin{pmatrix}
	1-d_i & b_i \\
	\frac{d_i(1-d_i)}{b_i} & d_i 
\end{pmatrix}
\begin{pmatrix}
	1-d_i & c_i \\
	\frac{d_i(1-d_i)}{c_i} & d_i 
\end{pmatrix}
\right) \\
= (1-d_i)^2 + d_i^2 + d_i(1-d_i)\left(\frac{b_i}{c_i}+\frac{c_i}{b_i}\right)
= 1 + d_i(1-d_i)\left(\frac{b_i}{c_i}+\frac{c_i}{b_i}-2\right)\neq1
\end{multline*}
when $b_i\neq c_i$.

Summarizing, we choose nonzero $b_i,c_i$, $i=1,\ldots,n$, such that $b_i\neq \pm c_i$.
Also, we choose $d_i\not\in \{0,1\}$ and $1/(1-d_i)\not\in K_i$ and, 
moreover, $d_id_j\neq1$ for all $i,j=1,\ldots,n$.
We may do it, since $F$ is infinite.
\end{proof}

Let $n\geq2$, denote by $H_n(F)$ the set of 
all symmetric matrices of order~$n$ over~$F$.
The space $H_n(F)$ under the product $A\circ B = (AB + BA)/2$ 
is also a~simple Jordan algebra.
By~\cite[Theorem 3.4]{MR}, its Jordan subalgebra $H_n'(F)$ 
consisting of all symmetric matrices with zero row sum 
has a~quasi-definite basis, since $H_n'(F)$ is isomorphic 
to the Matsuo algebra $M_{1/2}(G,D)$ for corresponding group~$G$ 
and set of involution $D$.

Let us recall the definition of Matsuo algebra.
Given a~group~$G$ generated by a set $D$ of involutions,
the Matsuo algebra $M_{\eta}(G,D)$, where $\eta\neq0,1$, 
is a~vector space~$\Span\{D\}$ with the product
$$
c\cdot d
= \begin{cases}
c, & |cd| = 1, \\
0, & |cd| = 2, \\
\frac{\eta}{2}(c+d-c^d), & |cd| = 3.
\end{cases}
$$
The Frobenius form on~$M_{\eta}(G,D)$ is defined as follows,
\begin{equation} \label{MatsuoForm}
(c,d)
= \begin{cases}
1, & c = d, \\
0, & |cd| = 2, \\
\frac{\eta}{2}, & |cd| = 3.
\end{cases}
\end{equation}
If $\eta = 1/2$, then $M_{1/2}(G,D)$ is an axial algebra of Jordan type~$1/2$.
By~\eqref{MatsuoForm}, $M_{1/2}(G,D)$ has a~quasi-definite basis.
If the automorphism group of $M_{1/2}(G,D)$ coincides 
with the Miyamoto group, then $M_{1/2}(G,D)$ is quasi-definite.

For the algebra~$H_n(F)$ over the field of real numbers, 
we may say much more.

\begin{proposition}
Algebra $H_n(\mathbb{R})$ is a definite axial algebra of Jordan type~$1/2$.
\end{proposition}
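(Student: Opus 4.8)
The plan is to exhibit the primitive axes of $H_n(\mathbb{R})$ explicitly, deduce the axial structure from the fact that $H_n(\mathbb{R})$ is a Jordan algebra, and then read off anisotropy from the real spectral theorem.

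First I would describe the primitive axes, by analogy with Lemma~\ref{lem:axisInMatrix}. A symmetric matrix $E\in H_n(\mathbb{R})$ with $E\circ E = E$ is an orthogonal projection (since $E = E^T = E^2$), and $\rank E = 1$ exactly when $E = vv^T$ for a column vector $v$ with $v^Tv = 1$. Writing a symmetric $S$ in block form with respect to $\mathbb{R}^n = \Span\{v\}\oplus v^\perp$ shows that $\ad_E$ acts as the identity on the $\Span\{v\}$-block, as $\tfrac12$ on the off-diagonal blocks, and as $0$ on the $v^\perp$-block; hence $\Spec(\ad_E)\subseteq\{0,1/2,1\}$ and $A_1(E) = \Span\{E\}$, so $E$ is a primitive axis (and, conversely, a symmetric idempotent of rank $\geq 2$ is not primitive). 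By the real spectral theorem every $X\in H_n(\mathbb{R})$ is a linear combination of such projections $vv^T$, so these axes generate $H_n(\mathbb{R})$; and since $H_n(\mathbb{R})$ is a Jordan algebra, the Peirce decomposition relative to any primitive idempotent satisfies the fusion rules~\eqref{FusionRules} with $\eta = 1/2$ (see~\cite{Albert}). Therefore $H_n(\mathbb{R})$ is an axial algebra of Jordan type~$1/2$.

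Next I would identify the Frobenius form. The bilinear form $(X,Y) := \tr(XY)$ on $H_n(\mathbb{R})$ is symmetric, nonzero, and invariant, since $\tr((X\circ Y)Z) = \tfrac12(\tr(XYZ)+\tr(YXZ)) = \tfrac12(\tr(XYZ)+\tr(XZY)) = \tr(X(Y\circ Z))$ by cyclicity of the trace; and for a primitive axis $E = vv^T$ we get $(E,E) = \tr(vv^Tvv^T) = (v^Tv)^2 = 1$. By uniqueness of the Frobenius form on an axial algebra, $\tr(XY)$ is the Frobenius form of $H_n(\mathbb{R})$.

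Finally, anisotropy. For $X = (x_{ij})\in H_n(\mathbb{R})$ we have $x_{ji} = x_{ij}$, hence $(X,X) = \tr(X^2) = \sum_{i,j} x_{ij}x_{ji} = \sum_{i,j} x_{ij}^2$, which over $\mathbb{R}$ vanishes only if $X = 0$ (equivalently, diagonalizing $X$ gives $\tr(X^2) = \sum_i \lambda_i^2$ for its real eigenvalues $\lambda_i$). So $H_n(\mathbb{R})$ is anisotropic, i.e.\ a definite axial algebra of Jordan type~$1/2$. I do not expect a genuine obstacle here: the only step needing care is the verification that the rank-one symmetric projections are primitive axes and that the Jordan fusion rules hold, and once the axial structure is in place the definiteness is immediate from the positivity of the trace form on real symmetric matrices.
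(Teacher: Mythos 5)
Your proof is correct, and it takes a genuinely different (and more complete) route than the paper's. The paper records beforehand that $H_n(F)$ is a simple Jordan algebra, hence an axial algebra of Jordan type~$1/2$, and its proof of the proposition consists of a single Cauchy--Bunyakovsky--Schwarz argument for the trace form: if two \emph{distinct} axes $A,B$ satisfied $(A,B)=1$, then equality in $\tr(XY^T)^2\leq\tr(XX^T)\tr(YY^T)$ would force $A$ and $B$ to be linearly dependent, hence equal --- a contradiction. Strictly speaking, that argument establishes quasi-definiteness; the anisotropy asserted in the statement is left implicit, residing in the positive-definiteness of $X\mapsto\tr(XX^T)=\sum_{i,j}x_{ij}^2$ that underlies the equality case of CBS. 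You instead prove the stated property head-on: $(X,X)=\tr(X^2)=\sum_{i,j}x_{ij}^2$ vanishes over $\mathbb{R}$ only for $X=0$, which is exactly what ``definite'' means, and from which quasi-definiteness then follows by Lemma~\ref{an}. You also supply groundwork the paper omits --- identifying the rank-one orthogonal projections $vv^T$ as the primitive axes, checking $\Spec(\ad_E)\subseteq\{0,1/2,1\}$ and primitivity via the block decomposition, invoking the spectral theorem to see that these axes span, and pinning down the Frobenius form by uniqueness --- all of which is correct and makes your version self-contained. What the paper's CBS argument buys is an explicit equality analysis showing distinct axes are never paired to~$1$; what yours buys is a direct, elementary verification of the property actually being claimed.
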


\begin{proof}
For the Frobenius form $(X,Y) = \tr(XY^T)$, we have 
the Cauchy---Bunyakovsky---Schwarz inequality
$\tr(XY^T)^2\leq \tr(XX^T)\tr(YY^T)$.
Applying it for pairwise distinct axes~$A$~and~$B$, we get
$(A,B)^2 \leq (A,A)(B,B) = 1$, and we have the equality 
if and only if $A$ and $B$ are linearly dependent. 
We conclude that $A = B$, a contradiction.
\end{proof}

\begin{remark}
For $n=2$, the statement holds over every field, 
since $H_2(F)$ coincides with 3-dimensional simple Jordan algebra 
of Clifford type considered above.
\end{remark}

\section{Idempotent $x_a(b)$ in quasi-definite axial algebra}

Given an axial algebra~$A$ of Jordan type $1/2$ 
and distinct axes $a,b\in A$, we introduce the element
\begin{equation} \label{formula:xa(b)}
x_a(b) = \frac{2ab - (a,b)a - b}{(a,b)-1} = \frac{a_0(b)}{1-(a,b)}.
\end{equation}
It is clear that element $x_a(b)$ is defined only if $(a,b)\neq 1$. 
By Lemma~\ref{help}, $x_a(b)$ is nonzero.

\begin{lemma}\label{id}
Let $A$ be an axial algebra of Jordan type $1/2$, 
and let $a,b\in A$ be axes such that $(a,b)\neq1$. 
Then $x_a(b)$ is an idempotent, 
$a+x_a(b)$ is a~unit in $\langle a,b\rangle_{\alg}$,
and $(x_a(b),x_a(b)) = 1$.
\end{lemma}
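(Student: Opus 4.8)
The plan is to reduce all three assertions to the identities of Lemma~\ref{help} together with the orthogonality of distinct $\ad_a$-eigenspaces under the Frobenius form. Throughout I abbreviate $\alpha=(a,b)$, $a_0=a_0(b)$, $a_{1/2}=a_{1/2}(b)$, so that $b=a_0+a_{1/2}+\alpha a$ and, by~\eqref{formula:xa(b)}, $x_a(b)=a_0/(1-\alpha)$ (with $a_0\neq0$ since $(a,b)\neq1$, as already noted before the statement). For idempotency I would simply square: $x_a(b)^2=a_0^2/(1-\alpha)^2$, and~\eqref{a0^2} gives $a_0^2=(1-\alpha)a_0$, whence $x_a(b)^2=a_0/(1-\alpha)=x_a(b)$.

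For the value of the form I would decompose $b=a_0+a_{1/2}+\alpha a$ with the three summands lying in $A_0(a)$, $A_{1/2}(a)$, $A_1(a)$ respectively; since distinct eigenspaces are orthogonal and $(a,a)=(b,b)=1$, this yields $1=(a_0,a_0)+(a_{1/2},a_{1/2})+\alpha^2$. To evaluate $(a_{1/2},a_{1/2})$ I would use invariance of the form together with $a\,a_{1/2}=\tfrac12 a_{1/2}$ and~\eqref{a1/2^2}: $(a_{1/2},a_{1/2})=2(a_{1/2}a_{1/2},a)=2(\alpha a_0+(\alpha-\alpha^2)a,\,a)=2(\alpha-\alpha^2)$, using $(a_0,a)=0$. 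Hence $(a_0,a_0)=1-2\alpha+2\alpha^2-\alpha^2=(1-\alpha)^2$, and therefore $(x_a(b),x_a(b))=(a_0,a_0)/(1-\alpha)^2=1$.

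For the unit I would first note that $\langle a,b\rangle_{\alg}=\Span\{a,b,ab\}$ — the formulas~\eqref{2gen:easy-formulae} show $\Span\{a,b,ab\}$ is already a subalgebra containing $a$ and $b$ — and that $ab=\tfrac12 a_{1/2}+\alpha a$, so $\langle a,b\rangle_{\alg}=\Span\{a,a_0,a_{1/2}\}$. It then suffices to check that $u:=a+x_a(b)$ fixes each of $a$, $a_0$, $a_{1/2}$. For $a$: $a^2=a$ and $a\,x_a(b)=0$ since $x_a(b)\in A_0(a)$. For $a_0$: $a\,a_0=0$ and $x_a(b)a_0=a_0^2/(1-\alpha)=a_0$ by~\eqref{a0^2}. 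For $a_{1/2}$: $a\,a_{1/2}=\tfrac12 a_{1/2}$ and $x_a(b)a_{1/2}=a_0a_{1/2}/(1-\alpha)=\tfrac12 a_{1/2}$ by~\eqref{a0*a1/2}, so $u\,a_{1/2}=a_{1/2}$. Thus $u$ acts as the identity on a spanning set of $\langle a,b\rangle_{\alg}$, hence is its unit.

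I expect no genuine obstacle: every step is a one-line consequence of Lemma~\ref{help} and of the basic facts about the Frobenius form recalled in~\S2. The only point requiring mild care is the identification $\langle a,b\rangle_{\alg}=\Span\{a,a_0,a_{1/2}\}$, and the observation that this description — and all the computations above — remain valid in the degenerate two-dimensional cases of Proposition~\ref{prop:2-gen}, where $a_0$ or $a_{1/2}$ may vanish.
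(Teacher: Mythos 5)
Your proof is correct, and for two of the three claims it takes a genuinely different route from the paper. For the value $(x_a(b),x_a(b))=1$ your computation is essentially the one in the paper: both expand $1=(b,b)=(a_0,a_0)+(a_{1/2},a_{1/2})+\alpha^2$ and evaluate $(a_{1/2},a_{1/2})$ via invariance and~\eqref{a1/2^2}. For idempotency and for the unit, however, the paper does not compute directly: it splits into the cases $\dim\langle a,b\rangle_{\alg}=2$ and $3$ and invokes Proposition~\ref{prop:2-gen} --- in the $3$-dimensional case the unit is $\sigma/\pi$ and $x_a(b)=\sigma/\pi-a$ is then an idempotent, while in the $2$-dimensional case $x_a(b)=b$. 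You instead work uniformly with the eigencomponent identities of Lemma~\ref{help}: $x_a(b)=a_0/(1-\alpha)$ squares to itself by~\eqref{a0^2}, and $a+x_a(b)$ is checked to fix the spanning set $\{a,a_0,a_{1/2}\}$ using~\eqref{a0^2} and~\eqref{a0*a1/2}. Your identification $\langle a,b\rangle_{\alg}=\Span\{a,b,ab\}=\Span\{a,a_0,a_{1/2}\}$ via~\eqref{2gen:easy-formulae} is sound, and your remark that the computation degrades gracefully when $a_0$ or $a_{1/2}$ vanishes covers the $2$-dimensional cases. What your approach buys is a case-free argument that does not lean on the classification of $2$-generated algebras beyond the multiplication formulas~\eqref{2gen:easy-formulae}; what the paper's approach buys is brevity, since Proposition~\ref{prop:2-gen} hands over the unit $\sigma/\pi$ ready-made.
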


\begin{proof}
If $\dim(\langle a,b\rangle_{\alg}) = 3$, then 
by Proposition~\ref{prop:2-gen}b, 
$\sigma/\pi$ is a unit of $\langle a,b\rangle_{\alg}$.
Thus, $x_a(b) = \sigma/\pi-a$ is an idempotent.
If $\dim(\langle a,b\rangle_{\alg}) = 2$, then 
$\langle a,b\rangle_{\alg} = Fa\oplus Fb$ 
by Proposition~\ref{prop:2-gen}a. Thus, $x_a(b) = b$.

Denote $\alpha = (a,b)$ and calculate by Lemma~\ref{help}:
\begin{multline*}
(x_a(b),x_a(b)) 
 = \frac{1}{(1-\alpha)^2}(a_0(b),a_0(b)) \\
 = \frac{1}{(1-\alpha)^2}((a_0(b)+a_{1/2}(b)+\alpha a,a_0(b)
 + a_{1/2}(b)+\alpha a)-(a_{1/2}(b),a_{1/2}(b)-\alpha^2(a,a)) \\
 = \frac{1}{(1-\alpha)^2}(1-\alpha^2-2(a_{1/2},a_{1/2}a))
 = \frac{1}{(1-\alpha)^2}(1-\alpha^2-2(a^2_{1/2},a))
 = 1.
\qedhere
\end{multline*}
\end{proof}

\begin{lemma}\label{axe3gen}
Let $A$ be an axial algebra of Jordan type $1/2$.
Let $a,b,c$ be axes such that $(a,b)\neq 1$ and 
$C = \langle a,b,c\rangle_{\alg}$.
Then $x_a(b)$ is a~primitive axis in $C$. 
\end{lemma}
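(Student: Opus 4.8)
The plan is to show that $x_a(b)$ is semisimple with spectrum contained in $\{0,1/2,1\}$ inside $C=\langle a,b,c\rangle_{\alg}$, and that it is primitive, by exploiting Proposition~\ref{prop:3-gen}: since $C$ is a Jordan algebra, its idempotents behave well, and in particular every idempotent in a Jordan algebra has a Peirce decomposition with eigenvalues $0,1/2,1$ for left multiplication. First I would invoke Lemma~\ref{id} to record that $x_a(b)$ is an idempotent of $C$ and that $e:=a+x_a(b)$ is the unit of $B:=\langle a,b\rangle_{\alg}\subseteq C$. The element $x_a(b)=\tfrac{1}{1-(a,b)}a_0(b)$ lies in $A_0(a)$, so $a$ and $x_a(b)$ are orthogonal idempotents.

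Next I would use that $C$ is a (commutative, power-associative) Jordan algebra. In any Jordan algebra, for an idempotent $f$ the operator $\ad_f$ is semisimple with $\Spec(\ad_f)\subseteq\{0,1/2,1\}$ — this is the classical Peirce decomposition, which follows from the Jordan identity applied to $f$. Hence $x_a(b)$ is automatically an axis of $C$; the only remaining point is primitivity, i.e. $C_1(x_a(b))=\Span\{x_a(b)\}$. This is where the real work lies, and I expect it to be the main obstacle: a priori $x_a(b)$ could split in $C$ even though it does not split in $B$.

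To handle primitivity I would argue as follows. Suppose $x_a(b)=f_1+f_2$ with $f_1,f_2$ nonzero orthogonal idempotents of $C$; I want a contradiction. The key leverage is the Frobenius form together with Lemma~\ref{id}, which gives $(x_a(b),x_a(b))=1$. Since the form is invariant and the $f_i$ are orthogonal idempotents, $(f_i,f_i) = (f_i, f_i\cdot f_i) = (f_i\cdot f_i, f_i)$ and, using $f_1 f_2 = 0$ and $f_i x_a(b) = f_i$, one computes $(f_i, x_a(b)) = (f_i, f_i) =: t_i$ with $t_1+t_2 = (x_a(b),x_a(b)) = 1$ and $t_1 t_2$ controlled by $(f_1,f_2)=0$-type relations. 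The plan is to show each $f_i$ is (up to the radical) an axis-like idempotent whose self-form value must be $1$ — using that in the semisimple quotient $C/R(C)$ every nonzero idempotent dominated by the primitive-in-$B$ idempotent $x_a(b)$ forces the sum $t_1+t_2=1$ with both $t_i = 1$, impossible unless one of them is zero. More concretely, I would pass to $C/R(C)$, where by Proposition~\ref{prop:3-gen} we have a semisimple Jordan algebra of dimension $\le 9$, decompose $x_a(b)$ into primitive idempotents there, and bound the number of summands using the form; the constraint $(x_a(b),x_a(b))=1$ together with $(\text{axis},\text{axis})=1$ and pairwise orthogonality kills all but one summand.

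Alternatively — and this may be the cleaner route — I would use the structure of $A_0(a)$: since $x_a(b)\in A_0(a)$ and (by the Seress Lemma and the fusion rules) $A_0(a)$ is a subalgebra on which $a$ acts as zero, any idempotent decomposition of $x_a(b)$ in $C$ refines one inside $C_0(a) = C\cap A_0(a)$. Then I would relate primitivity of $x_a(b)$ to primitivity of $b$ in $B$ via the isomorphism-type classification of $2$-generated subalgebras (Proposition~\ref{prop:2-gen}): in the $2$-dimensional case $x_a(b)=b$ is primitive by hypothesis, and in the $3$-dimensional case $x_a(b)$ is the non-unit axis of the spin-factor algebra $B$, whose primitivity in $C$ reduces to checking that no element of $C$ outside $\Span\{x_a(b)\}$ is fixed by $\ad_{x_a(b)}$; here I would use that $C$ is generated by $a,b,c$ and track the action of $\ad_{x_a(b)}$ on a spanning set of $C$ consisting of axes, again invoking the Frobenius form to exclude extra fixed vectors. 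The main obstacle, in either approach, is ruling out a "spurious" splitting of $x_a(b)$ that exists in $C$ but not in $B$; the Frobenius-form identity $(x_a(b),x_a(b))=1$ from Lemma~\ref{id}, combined with the fact that distinct axes and orthogonal axes have prescribed form values, is the tool I expect to resolve it.
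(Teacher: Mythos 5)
Your opening matches the paper's own argument: Lemma~\ref{id} gives that $x_a(b)$ is an idempotent, and Proposition~\ref{prop:3-gen} (i.e.\ the Peirce decomposition in the Jordan algebra $C$) gives that $\ad_{x_a(b)}$ is semisimple with spectrum in $\{0,1/2,1\}$ and that the fusion rules hold. The gap is exactly where you predicted it: primitivity, and neither of your two routes closes it.

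Route 1 attacks the wrong notion. You assume $x_a(b)=f_1+f_2$ with $f_1,f_2$ nonzero orthogonal idempotents and try to derive a contradiction; but primitivity here means $C_1(x_a(b))=\Span\{x_a(b)\}$, and an idempotent can be indecomposable into orthogonal idempotents while its Peirce $1$-eigenspace has dimension greater than one (already in semisimple Jordan algebras over non-closed fields the subalgebra $fCf$ of a primitive idempotent $f$ can be a division algebra of dimension $>1$, and in the non-semisimple case $C_1(x_a(b))$ may meet $R(C)$). Moreover, the step ``each $f_i$ has $(f_i,f_i)=1$'' presupposes that the $f_i$ are axes: the Frobenius form is normalized only on axes, and for a general idempotent the identity $(f,f)=(f,f^2)$ carries no information, so this part of the argument is circular; even granting it, concluding $k=1$ from $k\cdot 1=1$ in $F$ needs a restriction on the characteristic that the lemma does not impose. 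Route 2 (``track the action of $\ad_{x_a(b)}$ on a spanning set of $C$ and exclude extra fixed vectors'') is a correct description of what must be done, but it is not carried out. The paper does not find a conceptual argument here either: it verifies $\dim C_1(x_a(b))=1$ by an explicit computation ([Code\#1]) in the universal $9$-dimensional $3$-generated algebra $U_3$, and then descends to an arbitrary quotient $U_3/I$ using that an ideal $I$ contains the $\ad_{x_a(b)}$-homogeneous components of each of its elements, so that $x_a(b)y-y\in I$ forces $y\in\Span\{x_a(b)\}+I$. Without either that computation or a genuinely new argument for one-dimensionality of $C_1(x_a(b))$, your proof is incomplete.
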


\begin{proof}
By Lemma~\ref{id}, $x_a(b)$ is an idempotent in~$C$.
By Proposition~\ref{prop:3-gen}, $x_a(b)$ 
is a~semisimple idempotent in~$C$ 
with the fusion rules~\eqref{FusionRules} fulfilled. 
So, it remains to check that $x_a(b)$ is a~primitive idempotent. 

When $C$ is the universal 3-generated 9-dimensional axial algebra~$U_3$ 
of Jordan type $1/2$, it follows by~\cite[Code\#1]{github}.
The fact that $x_a(b)$ is a primitive axis 
in every quotient of $U_3$ by the ideal~$I$ 
not containing $x_a(b)$ follows from the property that 
$I$~contains all homogeneous components of its elements. 
Indeed, let $y\in U_3$ be such that $x_a(b)y-y\in I$, i.\,e., 
$x_a(b)y = y$ in $U_3/I$. 
Since $y = y_0+y_{1/2}+\alpha x_a(b)$ with respect 
to the idempotent $x_a(b)$, we get
$$
x_a(b)y - y = -(y_0+y_{1/2}/2)\in I.
$$
Thus, $y_0,y_{1/2}\in I$ and $y\in \Span\{x_a(b)\} + I$, 
it means that $x_a(b)$ is primitive in $U_3/I$.
\end{proof}

\begin{lemma}\label{Axe}
Let $A$ be an axial algebra $A$ of Jordan type $1/2$.
Let $a,b$ be two distinct axes such that $(a,b)\neq1$. 
Then $c = x_a(b)$ is a primitive semisimple idempotent in~$A$,
and eigenvalues of $\ad_c$ lie in the set $\{0,1/2,1\}$.
\end{lemma}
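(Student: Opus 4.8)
The plan is to reduce the general statement to the three-generated case already handled in Lemma~\ref{axe3gen}. The point of that lemma is that $x_a(b)$ is a primitive semisimple idempotent \emph{inside} any subalgebra $C = \langle a,b,c\rangle_{\alg}$, with $\Spec(\ad_c|_C)\subseteq\{0,1/2,1\}$; what remains is to promote this to the whole algebra $A$. Since $A$ is spanned as a vector space by its axes (a property of axial algebras of Jordan type recalled in \S2), it suffices to show that every axis $c\in A$ is an eigenvector of $\ad_{x_a(b)}$ with eigenvalue in $\{0,1/2,1\}$, and that the $1$-eigenspace is one-dimensional.

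First I would fix an arbitrary axis $c\in A$ and pass to the subalgebra $C = \langle a,b,c\rangle_{\alg}$. By Lemma~\ref{axe3gen}, $x_a(b)$ is a primitive axis in $C$; in particular $c = c_0 + c_{1/2} + \alpha\, x_a(b)$ is the Peirce decomposition of $c$ relative to $x_a(b)$ \emph{computed inside $C$}, with $c_0\in C_0(x_a(b))$, $c_{1/2}\in C_{1/2}(x_a(b))$. But these eigenspace conditions $x_a(b)c_0 = 0$, $x_a(b)c_{1/2} = \tfrac12 c_{1/2}$, $x_a(b)\cdot x_a(b) = x_a(b)$ are equations in the ambient algebra $A$ as well, so the very same decomposition witnesses that $c$ lies in $A_0(x_a(b)) + A_{1/2}(x_a(b)) + A_1(x_a(b))$. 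Running this over a spanning set of axes of $A$, we conclude $A = A_0(x_a(b)) + A_{1/2}(x_a(b)) + A_1(x_a(b))$, which gives both semisimplicity of $\ad_{x_a(b)}$ on $A$ and the spectral containment $\Spec(\ad_{x_a(b)})\subseteq\{0,1/2,1\}$.

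It remains to verify primitivity, i.e. $A_1(x_a(b)) = \Span\{x_a(b)\}$. Suppose $y\in A$ satisfies $x_a(b)y = y$. By the spanning property, $y$ is a linear combination of finitely many axes $c_1,\dots,c_m$; enlarge to $C' = \langle a,b,c_1,\dots,c_m\rangle_{\alg}$, so that $y\in C'$. Now $x_a(b)$ is again a primitive semisimple idempotent in $C'$ — here I would either invoke Lemma~\ref{axe3gen} applied to each triple together with the Peirce-decomposition argument above, or note that the proof of Lemma~\ref{axe3gen} localizes the primitivity check to the homogeneous-component argument, which works verbatim in any finitely generated subalgebra containing $a$ and $b$. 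Since $C'_1(x_a(b)) = \Span\{x_a(b)\}$ and $y\in C'$ with $x_a(b)y = y$, we get $y\in\Span\{x_a(b)\}$. Hence $A_1(x_a(b)) = \Span\{x_a(b)\}$ and $x_a(b)$ is primitive in $A$.

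The main obstacle is making precise that "$x_a(b)$ is a primitive semisimple idempotent in an arbitrary finitely generated subalgebra containing $a,b$" — Lemma~\ref{axe3gen} is stated only for three generators, so one must either argue that adding more generators does not destroy primitivity (the homogeneous-component argument in the proof of Lemma~\ref{axe3gen} is robust to this, as it only uses the Peirce decomposition with respect to $x_a(b)$ and the fact that the relevant ideal is spanned by homogeneous components), or first establish the Peirce decomposition $A = \bigoplus_{\lambda\in\{0,1/2,1\}} A_\lambda(x_a(b))$ as above and then run the homogeneous-component argument directly in $A$. Either way the delicate point is the ideal-theoretic primitivity step; the spectral and semisimplicity claims follow cleanly from the spanning-by-axes property once Lemma~\ref{axe3gen} is in hand.
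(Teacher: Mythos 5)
Your first step is exactly the paper's argument: take a spanning set (the paper uses a basis) of axes, and for each axis $r$ pass to $C=\langle a,b,r\rangle_{\alg}$, where Lemma~\ref{axe3gen} gives the Peirce decomposition of $r$ relative to $x_a(b)$; since the eigenvector equations are equations in $A$, summing over the spanning set yields semisimplicity of $\ad_{x_a(b)}$ on $A$ and $\Spec(\ad_{x_a(b)})\subseteq\{0,1/2,1\}$. That part is fine.

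The primitivity step, however, contains a genuine gap. You enlarge to $C'=\langle a,b,c_1,\dots,c_m\rangle_{\alg}$ and assert that $x_a(b)$ is a primitive axis in $C'$, but Lemma~\ref{axe3gen} is proved only for three generators: its proof runs through the universal $9$-dimensional algebra $U_3$ and an explicit computation there, plus the homogeneous-component property of ideals of $U_3$. Neither ingredient is available for $m>1$ extra generators, and "the argument works verbatim in any finitely generated subalgebra" is precisely what would need proof. The irony is that your own first paragraph already contains the correct and much shorter route, which is the one the paper takes: because $x_a(b)$ is \emph{primitive} in each $\langle a,b,r\rangle_{\alg}$, the decomposition of each spanning axis $r$ reads $r=r_0+r_{1/2}+\beta_r\, x_a(b)$ with the $1$-component a scalar multiple of $x_a(b)$ itself. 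Summing over the spanning set therefore gives not merely $A=A_0+A_{1/2}+A_1$ but the sharper statement $A=A_0(x_a(b))+A_{1/2}(x_a(b))+\Span\{x_a(b)\}$, and since eigenvectors for distinct eigenvalues are linearly independent, this forces $A_1(x_a(b))=\Span\{x_a(b)\}$. No passage to larger finitely generated subalgebras is needed; you should delete that detour and extract primitivity directly from the refined decomposition.
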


\begin{proof}
By Lemma~\ref{id}, $c = x_a(b)$ is an idempotent.

Let $B$ be a linear basis of $A$ consisting of axes.
We may assume that $a,b\in B$. Let us state that
\begin{equation} \label{formula:x_a(b)-axis}
A = Y_0(c) + Y_{1/2}(c) + \Span\{c\},
\end{equation}
where $Y_{\varepsilon}(c) = \{z\in A \mid cz = \varepsilon z\}$.
By Lemma~\ref{axe3gen}, for every $r\in B$ we have
$r = r_0(c)+r_{1/2}(c)+\beta_r x_a(b)$.
Here $r_0(c)\in Y_0(c)$, $r_{1/2}(c)\in Y_{1/2}(c)$, and $\beta_r\in F$. 
Since each element of the basis has a~decomposition 
on the eigensubspaces of $\ad_{x_a(b)}$, the statement follows. 
\end{proof}

Suppose that $B$ is a basis of an axial algebra~$A$ of Jordan type $1/2$ 
consisting of axes such that $(a,y)\neq1$ for a fixed $a\in B$ and every 
$y\in B\setminus\{a\}$. Define $B_0 = \{x_a(y) \mid y\in B\setminus\{a\}\}$.

\begin{lemma}\label{bazis}
Let $A$ be an axial algebra $A$ of Jordan type $1/2$, 
and let $A$ have a~quasi-definite basis~$B$.
For $a\in B$, we have $\Span \{B_0\} = A_0(a)$.
\end{lemma}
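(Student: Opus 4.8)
The plan is to read off both inclusions from the Peirce-type decomposition of $A$ relative to the primitive axis $a$, namely $A = A_0(a)\oplus A_{1/2}(a)\oplus\Span\{a\}$ (diagonalizability of $\ad_a$ together with $\Spec(\ad_a)\subseteq\{0,1/2,1\}$ and primitivity $A_1(a)=\Span\{a\}$). For the inclusion $\Span\{B_0\}\subseteq A_0(a)$ I would simply invoke formula~\eqref{formula:xa(b)}: for each $y\in B\setminus\{a\}$ the element $x_a(y)$ is defined (the scalar $1-(a,y)$ is nonzero because $B$ is quasi-definite) and equals $a_0(y)/(1-(a,y))$, which lies in $A_0(a)$ by the definition of the component $a_0(y)$. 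So $B_0\subseteq A_0(a)$ and the inclusion follows.

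For the reverse inclusion $A_0(a)\subseteq\Span\{B_0\}$ I would use the linear projection $\pi_0\colon A\to A_0(a)$ along $A_{1/2}(a)\oplus\Span\{a\}$, which is well-defined and surjective by the decomposition above. From $y = a_0(y)+a_{1/2}(y)+(a,y)\,a$ we get $\pi_0(y)=a_0(y)$ for every $y\in B\setminus\{a\}$, while $\pi_0(a)=0$. Since $B$ spans $A$ and $\pi_0$ is surjective, the image $\pi_0(B)=\{0\}\cup\{a_0(y)\mid y\in B\setminus\{a\}\}$ spans $A_0(a)$, hence $\{a_0(y)\mid y\in B\setminus\{a\}\}$ already spans $A_0(a)$. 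Finally, each $x_a(y)$ is a nonzero scalar multiple of $a_0(y)$, so $\Span\{B_0\}=\Span\{a_0(y)\mid y\in B\setminus\{a\}\}=A_0(a)$, completing the proof.

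I do not expect a genuine obstacle here: the argument is linear algebra on top of the explicit formula for $x_a(b)$. The only points needing care are that the projection $\pi_0$ is well-defined, which is exactly where primitivity of $a$ (that is, $A_1(a)=\Span\{a\}$) enters, and the elementary fact that a surjective linear map carries a spanning set to a spanning set. Quasi-definiteness of $B$ is used only to guarantee that every $x_a(y)$ exists, i.e. $(a,y)\neq 1$.
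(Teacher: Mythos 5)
Your proposal is correct and follows essentially the same route as the paper: both arguments expand an element of $A_0(a)$ (equivalently, each basis axis $b_j$) via the decomposition $A = A_0(a)\oplus A_{1/2}(a)\oplus\Span\{a\}$, observe that the $A_0(a)$-component $a_0(b_j)$ equals $(1-(a,b_j))\,x_a(b_j)$, and conclude by comparing components; your phrasing via the projection $\pi_0$ is just a cleaner packaging of the paper's component-matching step. The only difference is cosmetic: you also spell out the easy inclusion $\Span\{B_0\}\subseteq A_0(a)$, which the paper leaves implicit.
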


\begin{proof}
Let $z\in A_0(a)$, we may present it as 
a~linear combination of elements from~$B$,
$$
z = \sum\limits_{j\in J}\varkappa_j b_j
= \sum\limits_{j\in J_1}\varkappa_j b_j
+ \sum\limits_{j\in J_2}\varkappa_j b_j,
$$
where $J = J_1 \dot \cup J_2$ and
$j\in J_2$ if and only if $a_0(b_j) = 0$.
Also, we present 
$b_j = a_0(b_j) + a_{1/2}(b_j) + (a,b_j)a$ for every $b_j\in B$.

For $j\in J_1$, we have $a_0(b_j) = (1-(a,b_j))x_a(b_j)$.
Since $A$ is a direct vector-space sum of its subspaces 
$A_0(a)$, $A_{1/2}(a)$, and $\Span \{a\}$, $z$ lies in $\Span\{B_0\}$.
\end{proof}

Let $a,b,c$ be axes in an axial algebras of Jordan type~$1/2$ 
such that $(a,b),(a,c),(b,c)\neq1$.
The following example shows that it may happen 
$(x_a(b),x_a(c))=1$ when $x_a(b)\neq x_a(c)$.

\begin{example}
Let $A = A(\alpha,\beta,\gamma,\phi)$ be the universal 
3-generated axial algebra of Jordan type~$1/2$. 
Assume that $\alpha = (a,b)$, $\beta = (b,c)$ 
and $\gamma = (a,c)$ are not equal to 1. 
For the axes $x_a(b)$ and $x_a(c)$, 
we have by~\cite[Code\#2]{github}
$$
(x_a(b),x_a(c))
= \frac{-\alpha\gamma-\beta+2\phi}{-\alpha\gamma+\alpha+\gamma-1},
$$
which is equal to 1 if and only if $\alpha+\beta+\gamma-2\phi-1=0$.
However, $x_a(b)\neq x_a(c)$.
\end{example}

\begin{lemma}\label{xa(b)PrimInA0}
Let $A$ be an axial algebra of Jordan type $1/2$. 
Let $a,b$ be axes of~$A$ such that $(a,b)\neq0,1$.
Then $A_0(a)\cap A_{1/2}(b) = (0)$.
\end{lemma}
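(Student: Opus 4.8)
The plan is to assume a nonzero $z\in A_0(a)\cap A_{1/2}(b)$ exists and derive a contradiction. Write $\alpha=(a,b)$ and $c=x_a(b)$; since $(a,b)\neq 1$ (hence $a\neq b$, because $(a,a)=1$), Lemma~\ref{Axe} applies and $c$ is a primitive semisimple idempotent with $\Spec(\ad_c)\subseteq\{0,1/2,1\}$. First I would expand $bz$ via the decomposition $b=a_0(b)+a_{1/2}(b)+\alpha a$. Since $az=0$, and the fusion rules~\eqref{FusionRules} for the axis $a$ force $a_0(b)z\in A_0(a)$ and $a_{1/2}(b)z\in A_{1/2}(a)$, comparing the $A_0(a)$-, $A_{1/2}(a)$-, and $A_1(a)$-components in the identity $bz=\tfrac12 z$ (using $z\in A_0(a)$) gives $a_0(b)z=\tfrac12 z$. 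Recalling from~\eqref{formula:xa(b)} that $c=a_0(b)/(1-\alpha)$, this reads $cz=\tfrac{1}{2(1-\alpha)}z$, so the nonzero $z$ is an eigenvector of $\ad_c$ with eigenvalue $\tfrac{1}{2(1-\alpha)}$.

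Consequently $\tfrac{1}{2(1-\alpha)}\in\{0,1/2,1\}$. The value $0$ is impossible, and $\tfrac12$ would force $\alpha=0$, which is excluded by hypothesis. Hence $\tfrac{1}{2(1-\alpha)}=1$, i.e.\ $\alpha=\tfrac12$, and then $z\in A_1(c)$; by primitivity of $c$ this means $z\in\Span\{c\}$, so $c$ itself lies in $A_{1/2}(b)$, i.e.\ $bc=\tfrac12 c$.

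It then remains to refute $bc=\tfrac12 c$. A short computation using Lemma~\ref{help} — namely $a_0(b)^2=(1-\alpha)a_0(b)$ from~\eqref{a0^2}, $a_0(b)a_{1/2}(b)=\tfrac12(1-\alpha)a_{1/2}(b)$ from~\eqref{a0*a1/2}, and $a\cdot a_0(b)=0$ — yields $b\cdot a_0(b)=(1-\alpha)\bigl(a_0(b)+\tfrac12 a_{1/2}(b)\bigr)$, hence $bc=a_0(b)+\tfrac12 a_{1/2}(b)$. With $\alpha=\tfrac12$ we have $c=2a_0(b)$, so $bc=\tfrac12 c$ is equivalent to $a_{1/2}(b)=0$; but then~\eqref{a1/2^2} becomes $\tfrac12 a_0(b)+\tfrac14 a=0$, which is impossible since $a_0(b)\in A_0(a)$ and $a\in A_1(a)$ lie in different summands of $A=A_0(a)\oplus A_{1/2}(a)\oplus A_1(a)$ while $a\neq 0$. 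This contradiction proves $A_0(a)\cap A_{1/2}(b)=(0)$. I expect the main obstacle to be precisely the borderline value $\alpha=\tfrac12$: away from it the semisimplicity and the restricted spectrum of $\ad_{x_a(b)}$ already close the argument, whereas at $\alpha=\tfrac12$ one genuinely needs the extra identity for $b\cdot x_a(b)$ to rule the case out.
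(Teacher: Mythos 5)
Your proposal is correct, and its main line coincides with the paper's: both compute $bz$ through $b=a_0(b)+a_{1/2}(b)+(a,b)a$, deduce $x_a(b)z=\frac{1}{2(1-\alpha)}z$, invoke the restricted spectrum of $\ad_{x_a(b)}$ from Lemma~\ref{Axe} to isolate $\alpha=1/2$, and then use primitivity to force $z\in\Span\{x_a(b)\}$. The only divergence is the endgame: the paper kills the residual case in one line by observing that $x_a(b)=x_a(b)^2$ would lie simultaneously in $A_{1/2}(b)$ and in $A_{1/2}(b)^2\subseteq A_0(b)+A_1(b)$, which are complementary summands; you instead compute $b\cdot x_a(b)=a_0(b)+\tfrac12 a_{1/2}(b)$ from Lemma~\ref{help}, conclude $a_{1/2}(b)=0$, and contradict~\eqref{a1/2^2}. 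Both closings are valid; the paper's is shorter and uses only the $\mathbb{Z}_2$-grading coming from the fusion rules, while yours is a self-contained explicit calculation that also records the generally useful identity for $b\cdot x_a(b)$.
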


\begin{proof}
Put $\alpha = (a,b)$.
For $z\in A_0(a)\cap A_{1/2}(b)$, we compute
$$
z/2 = zb 
= z(a_0(b)+a_{1/2}(b)+\alpha a)
= za_0(b) + za_{1/2}(b).
$$
Since $za_0(b) \in A_0(a)$, $za_{1/2}(b) \in A_{1/2}(a)$, 
we get $za_{1/2}(b) = 0$ and $zx_a(b) = z/(2-2\alpha)$.
By~Lemma~\ref{Axe}, $\ad_{x_a(b)}$ has eigenvalues 
only from the set $\{0,1/2,1\}$. 
Since $1/(2-2\alpha)\neq0$ and $1/(2-2\alpha)\neq1/2$ when $\alpha\neq0$,
we have the only case $\alpha = 1/2$ and $z\in A_1(x_a(b))$.
Since $x_a(b)$ is primitive, 
we derive that $z = kx_a(b)$ for some $k\in F$.
If $k\neq0$, then $(x_a(b))^2 = x_a(b)$ lies in 
$A_{1/2}(b)$ and in $A_{1/2}(b)^2\subset A_0(b)+\Span(b)$ 
at the same time. Thus, $z = 0$.
\end{proof}

\section{Subalgebra $A_0(a)$ in quasi-definite strongly axial case}

\begin{definition}
We call an axial algebra~$A$ of Jordan type $1/2$ {\bf strongly axial}, 
if every primitive semisimple idempotent with eigenvalues $\{0,1/2,1\}$ 
in~$A$ is a~primitive axis.
\end{definition}

Every axial algebra of Jordan type $1/2$ 
which is also a~Jordan one is strongly axial~\cite{Albert}.

\begin{lemma}\label{axisA2B}
Let $A$ be an axial algebra of Jordan type, let $B$ be a subalgebra of~$A$,
and let $a\in B$ be an axis in~$A$. Then $a$ is an axis in~$B$.
\end{lemma}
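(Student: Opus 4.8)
The plan is to show that the restriction of $\ad_a$ to $B$ is diagonalizable, given that $\ad_a$ is diagonalizable on the ambient algebra $A$. The statement that $a$ is primitive in $B$ is immediate: since $B$ is a subalgebra, $B_1(a) = \{x\in B\mid ax = x\}\subseteq A_1(a) = \Span\{a\}$, and since $a\in B$, we get $B_1(a) = \Span\{a\}$. So the only real content is semisimplicity.

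For semisimplicity, the key observation is a standard fact from linear algebra: if $T\colon V\to V$ is a diagonalizable linear operator and $W\subseteq V$ is a $T$-invariant subspace, then the restriction $T|_W$ is diagonalizable. Here $V = A$, $T = \ad_a$, and $W = B$; the subspace $B$ is $\ad_a$-invariant precisely because $B$ is a subalgebra containing $a$, so $aB\subseteq B$. First I would state this linear-algebra lemma (or simply invoke it), then verify the two hypotheses: $\ad_a$ is diagonalizable on $A$ because $a$ is an axis in $A$, and $B$ is $\ad_a$-invariant because $B$ is a subalgebra with $a\in B$. It follows that $\ad_a|_B$ is diagonalizable, i.e. $a$ is an axis in $B$.

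The cleanest way to prove the linear-algebra fact, in case one wants the argument self-contained, is via the minimal polynomial: since $\ad_a$ is diagonalizable on $A$, its minimal polynomial $m(x)$ on $A$ is a product of distinct linear factors; the minimal polynomial of $\ad_a|_B$ divides $m(x)$, hence is also squarefree, hence $\ad_a|_B$ is diagonalizable. Alternatively, one can argue directly with eigenspaces: write $A = \bigoplus_\lambda A_\lambda(a)$, take $x\in B$, decompose $x = \sum_\lambda x_\lambda$ with $x_\lambda\in A_\lambda(a)$, and show each $x_\lambda$ already lies in $B$ — this uses that the projections onto eigenspaces are polynomials in $\ad_a$ (Lagrange interpolation) and that $B$ is $\ad_a$-invariant, so polynomials in $\ad_a$ preserve $B$. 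Either route is routine.

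I do not expect any serious obstacle here; the statement is essentially a packaging of a well-known linear-algebra fact together with the trivial observation about primitivity. The only point requiring a moment's care is making sure one has both hypotheses of the linear-algebra lemma in place — in particular that $B$ being a subalgebra containing $a$ is exactly what makes $B$ invariant under $\ad_a$ — and recording that the eigenvalues of $\ad_a|_B$ form a subset of those of $\ad_a$ on $A$ (so that, when applied later to axial algebras of Jordan type, the spectrum stays within $\{0,1/2,1\}$).
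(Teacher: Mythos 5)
Your proof is correct and takes essentially the same route as the paper: both reduce to the standard fact that a diagonalizable operator restricted to an invariant subspace remains diagonalizable, with $B_\lambda(a)=B\cap A_\lambda(a)$, and $B$ is $\ad_a$-invariant because it is a subalgebra containing $a$. The paper additionally notes in passing that the fusion rules are automatically inherited by $B$ for the same reason; you omit this, but it is not required by the literal statement of the lemma.
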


\begin{proof}
The equality $B = B_0(a)\oplus B_{1/2}(a) \oplus \Span\{a\}$ holds 
due to the fact that $\ad_a$ is a semisimple operator on $A$ 
with the eigenvalues $\{0,1/2,1\}$. Since  
$B_0(a) = B\cap A_0(a)$, $B_{1/2}(a) = B\cap A_{1/2}(a)$,
the fusion rules~\eqref{FusionRules} are automatically fulfilled in~$B$.
\end{proof}

\begin{proposition}\label{AxA0}
Let $A$ be a quasi-definite strongly axial algebra of Jordan type $1/2$ 
and let $B$ be a subalgebra of~$A$.
Let $a,b\in B$ be distinct axes in $A$, then 
$x_a(b)$ is a primitive axis in $A$, $B$, and in $A_0(a)\cap B$.
\end{proposition}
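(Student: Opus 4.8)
The plan is simply to glue together Lemma~\ref{Axe} (which already uses strong axiality) and Lemma~\ref{axisA2B}, after recording two trivial facts about where $x_a(b)$ lives. Since $A$ is quasi-definite, $(a,b)\neq1$, so $c:=x_a(b)$ is defined. From the left-hand form in~\eqref{formula:xa(b)}, $c$ is a scalar multiple of $2ab-(a,b)a-b\in\langle a,b\rangle_{\alg}\subseteq B$, so $c\in B$; from the right-hand form $c=a_0(b)/(1-(a,b))$ together with $a_0(b)\in A_0(a)$, so $c\in A_0(a)$. Hence $c$ lies in $A$, in $B$, and in $A_0(a)\cap B$.

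Next I handle the algebra-level statement. By Lemma~\ref{Axe}, $c$ is a primitive semisimple idempotent of $A$ with $\Spec(\ad_c)\subseteq\{0,1/2,1\}$; since $A$ is strongly axial, $c$ is a primitive axis in $A$. In particular $\ad_c$ is semisimple on $A$ with eigenvalues in $\{0,1/2,1\}$, the fusion rules~\eqref{FusionRules} hold for $c$, and $A_1(c)=\Span\{c\}$.

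Finally I descend to the two subalgebras. The subspace $A_0(a)$ is a subalgebra of $A$ because $A_0(a)^2\subseteq A_0(a)$ by~\eqref{FusionRules}, and $B$ is a subalgebra by hypothesis, so $A_0(a)\cap B$ is a subalgebra of $A$ containing $c$. Applying Lemma~\ref{axisA2B} to the subalgebra $B$ (resp.\ to $A_0(a)\cap B$) shows $c$ is an axis in $B$ (resp.\ in $A_0(a)\cap B$): the needed fusion rules come for free since $C_\varepsilon(c)=C\cap A_\varepsilon(c)$ for any such subalgebra $C$. Primitivity transfers just as easily: for any subalgebra $C\ni c$ we have $C_1(c)=C\cap A_1(c)=C\cap\Span\{c\}=\Span\{c\}$. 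Taking $C\in\{A,\,B,\,A_0(a)\cap B\}$ yields the three claims.

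I do not foresee a genuine obstacle here — the proposition is essentially a repackaging of earlier results. The only points where a reader might pause are the claim that $c\in A_0(a)$ (immediate from~\eqref{formula:xa(b)}) and the claim that $A_0(a)\cap B$ is closed under the product (immediate from the $A_0$-fusion rule); each is one line. The one thing worth stating explicitly is that "primitive axis" is being used in the sense that the fusion rules~\eqref{FusionRules} are in force, so that Lemma~\ref{axisA2B} is genuinely applicable.
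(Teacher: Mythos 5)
Your proof is correct and follows essentially the same route as the paper's: Lemma~\ref{Axe} plus strong axiality gives that $x_a(b)$ is a primitive axis in $A$, and Lemma~\ref{axisA2B} transfers this to $B$ and $A_0(a)\cap B$. The extra details you supply (membership of $x_a(b)$ in the two subalgebras, closure of $A_0(a)\cap B$, and the transfer of primitivity) are all accurate and are left implicit in the paper's version.
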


\begin{proof}
By Lemma~\ref{Axe}, $x_a(b)$ is a~primitive semisimple idempotent 
with eigenvalues $\{0,1/2,1\}$ in~$A$.
Since $A$ is strongly axial, then $x_a(b)$ is a~primitive axis in~$A$. 

By Lemma~\ref{axisA2B}, $x_a(b)$ is an axis in~$B$ and $A_0(a)\cap B$ too.
\end{proof}

\begin{corollary} \label{A0-QDBasis}
Let $A$ be a quasi-definite strongly axial algebra of Jordan type $1/2$, 
and let $a$ be an axis in $A$, and $\dim A>1$.
Then $A_0(a)$ is an axial algebra of Jordan type $1/2$ 
which contains a quasi-definite basis.
\end{corollary}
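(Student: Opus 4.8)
The plan is to combine the fact that $x_a(b)$ is always a primitive axis in the strongly axial quasi-definite setting (Proposition~\ref{AxA0}) with Lemma~\ref{bazis}, which expresses $A_0(a)$ as the span of such idempotents. First I would observe that $A_0(a)$ is a subalgebra of $A$: this is immediate from the fusion rule $A_0(a)^2\subseteq A_0(a)$ in~\eqref{FusionRules}. Since $\dim A>1$ and $A$ is generated by axes, there exists an axis $b\in A$ with $b\notin\Span\{a\}$; as $A$ is quasi-definite we have $(a,b)\neq 1$, so $x_a(b)$ is a nonzero primitive axis in $A_0(a)$ by Proposition~\ref{AxA0} applied with $B=A$. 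Hence $A_0(a)$ contains at least one axis.

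Next I would show $A_0(a)$ is generated by axes and is of Jordan type $1/2$. Take a linear basis $B$ of $A$ consisting of axes containing $a$; quasi-definiteness guarantees $(a,y)\neq 1$ for every $y\in B\setminus\{a\}$, so $B_0=\{x_a(y)\mid y\in B\setminus\{a\}\}$ is defined. By Lemma~\ref{bazis}, $\Span\{B_0\}=A_0(a)$, so $A_0(a)$ is spanned — in particular generated as an algebra — by the primitive axes $x_a(y)$ (these are axes in $A$ by Proposition~\ref{AxA0}, and hence axes in the subalgebra $A_0(a)$ by Lemma~\ref{axisA2B}, with $\Spec(\ad_{x_a(y)})\subseteq\{0,1/2,1\}$). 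Thus $A_0(a)$ is an axial algebra generated by primitive axes of Jordan type $1/2$, and the fusion rules~\eqref{FusionRules} hold automatically in the subalgebra by Lemma~\ref{axisA2B}.

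Finally, to produce a quasi-definite basis of $A_0(a)$: extract from the spanning set $B_0=\{x_a(y)\mid y\in B\setminus\{a\}\}$ a subset $X\subseteq B_0$ that is a basis of $A_0(a)$. Each element of $X$ is a primitive axis of $A_0(a)$ by the previous paragraph, so it remains only to check $(x,x')\neq 1$ for distinct $x,x'\in X$. The Frobenius form on $A_0(a)$ is the restriction of the one on $A$ (uniqueness of the Frobenius form, as in Lemma~\ref{lem:radA0}), so this is a statement about $A$. Here $x=x_a(y)$ and $x'=x_a(y')$ for distinct $y,y'\in B\setminus\{a\}$ with $x_a(y)\neq x_a(y')$; since $x_a(y),x_a(y')$ are distinct axes of $A$, quasi-definiteness of $A$ gives $(x_a(y),x_a(y'))\neq 1$ directly. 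So $X$ is a quasi-definite basis of $A_0(a)$, completing the proof.

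The main obstacle I anticipate is the very last point: one might worry that distinct elements of $X$ could have $(x_a(y),x_a(y'))=1$, as the Example after Lemma~\ref{bazis} shows this can happen for triples $x_a(b),x_a(c)$ in general. But the resolution is clean — in a \emph{quasi-definite} $A$, any two distinct axes (and the $x_a(y)$ are axes here, by strong axiality) satisfy $(\cdot,\cdot)\neq 1$ by definition, so the pathology in the example cannot occur. The only care needed is to ensure the $x_a(y)$ for $y$ ranging over the chosen basis are pairwise distinct, which is automatic after passing to a genuine basis $X$ of $A_0(a)$ extracted from the spanning family $B_0$.
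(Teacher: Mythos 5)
Your proposal is correct and follows essentially the same route as the paper: extract a basis of $A_0(a)$ from the spanning set $\{x_a(y)\mid y\in B\setminus\{a\}\}$ via Lemma~\ref{bazis}, use Proposition~\ref{AxA0} to see its elements are axes, and invoke quasi-definiteness of $A$ itself to rule out $(x_a(y),x_a(y'))=1$ for distinct members. You spell out a few details the paper leaves implicit (that $a$ can be included in the basis $B$, and the role of Lemma~\ref{axisA2B}), but the argument is the same.
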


\begin{proof}
Let $X$ be a basis of~$A$ consisting of axes. 
From Lemma~\ref{bazis} it follows that 
$X_0=\{x_a(x_i)\mid x_i\in X\}$ includes a basis of $A_0(a)$
which by Proposition~\ref{AxA0} consists of axes in $A_0(a)$.
Since all $x_a(x_i)$ are axes in $A$, this basis is quasi-definite. 
\end{proof}

\begin{lemma}\label{zeroaxis}
Let $A$ be a quasi-definite strongly axial algebra, and let $a$~and~$b$ be axes. 
We have $(a,b) = 0$ if and only if $b\in A_0(a)$.
\end{lemma}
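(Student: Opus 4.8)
The plan is to prove the two implications separately. The forward direction ($b\in A_0(a)$ $\Rightarrow$ $(a,b)=0$) is immediate from the orthogonality of eigenspaces: since $b\in A_0(a)$ and $a\in A_1(a)$, and distinct eigenspaces of $\ad_a$ are orthogonal with respect to the Frobenius form, we get $(a,b)=0$. (Alternatively, in the decomposition $b = a_0(b)+a_{1/2}(b)+\alpha(b)a$ the hypothesis $b\in A_0(a)$ forces $\alpha(b)=(a,b)=0$ directly.) This direction needs no strong axiality or quasi-definiteness.

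For the converse, suppose $(a,b)=0$ and write $b = a_0(b)+a_{1/2}(b)$, where $a_0(b)\in A_0(a)$ and $a_{1/2}(b)\in A_{1/2}(a)$. I must show $a_{1/2}(b)=0$. First I would dispose of the degenerate case: if $a_0(b)=0$ then $b=a_{1/2}(b)\in A_{1/2}(a)$, and then $b^2=b\in A_{1/2}(a)^2\subseteq A_0(a)+\Span\{a\}$, which meets $A_{1/2}(a)$ only in $(0)$, so $b=0$, contradicting that $b$ is an axis. Hence $a_0(b)\neq0$, and since $x_a(b) = a_0(b)/(1-(a,b)) = a_0(b)$ (as $(a,b)=0$), we have $a_0(b) = x_a(b)$. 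By Proposition~\ref{AxA0}, $x_a(b)$ is a primitive axis in $A$; by Lemma~\ref{id} it satisfies $(x_a(b),x_a(b))=1$, and by Lemma~\ref{help}, equation~\eqref{a1/2^2} with $\alpha=(a,b)=0$, we get $a_{1/2}(b)^2 = 0$.

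Now I would compute $(a_{1/2}(b),a_{1/2}(b))$. Using Seress-type associativity or the direct identity $(a_{1/2}(b),a_{1/2}(b)) = 2(a_{1/2}(b),a_{1/2}(b)\cdot a) = 2(a_{1/2}(b)^2,a) = 2(0,a) = 0$. So $a_{1/2}(b)$ is an isotropic vector. The remaining task is to pass from isotropy to $a_{1/2}(b)=0$, and here strong axiality and quasi-definiteness enter. The key idea: consider the idempotent $a+x_a(b)$, which by Lemma~\ref{id} is the unit of $\langle a,b\rangle_{\alg}$; one checks that $b$ and $\tau_a(b) = a_0(b) - a_{1/2}(b) + (a,b)a = x_a(b) - a_{1/2}(b)$ are both axes (the latter since $\tau_a$ is a Miyamoto involution, hence an automorphism), they are distinct if $a_{1/2}(b)\neq0$, and their Frobenius inner product is $(b,\tau_a(b)) = (a_0(b)+a_{1/2}(b), a_0(b)-a_{1/2}(b)) = (a_0(b),a_0(b)) - (a_{1/2}(b),a_{1/2}(b)) = 1 - 0 = 1$, using $(a_0(b),a_0(b)) = (1-(a,b))(a_0(b),a) $ computed via~\eqref{a0^2}, or more directly $(a_0(b),a_0(b)) = (x_a(b),x_a(b)) = 1$ since $a_0(b)=x_a(b)$ here. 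This contradicts quasi-definiteness unless $b = \tau_a(b)$, i.e. $a_{1/2}(b)=0$, whence $b = a_0(b)\in A_0(a)$.

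The main obstacle will be verifying cleanly that $\tau_a(b)$ is genuinely an axis distinct from $b$ and keeping track of which eigenspace each piece lands in; in particular one must make sure the computation $(a_0(b),a_0(b))=1$ is justified (it follows from $a_0(b)=x_a(b)$ and Lemma~\ref{id}, since $(a,b)=0$). Once that bookkeeping is in place, the contradiction with quasi-definiteness is the whole argument, and strong axiality is used only through Proposition~\ref{AxA0} to know $x_a(b)$ is an axis. An alternative route avoiding $\tau_a$: note $b = x_a(b) + a_{1/2}(b)$ with $x_a(b)$ a primitive axis and $a_{1/2}(b)\in A_{1/2}(x_a(b))$ squaring to zero; then isotropy of $a_{1/2}(b)$ plus primitivity and anisotropy-type arguments local to $\langle a,b\rangle_{\alg}$ (which is $2$- or $3$-dimensional by Proposition~\ref{prop:2-gen}) force $a_{1/2}(b)=0$ — but the $\tau_a$-based contradiction with quasi-definiteness is cleaner and is the approach I would write up.
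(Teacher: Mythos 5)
Your proposal is correct, and its engine is the same as the paper's: use Proposition~\ref{AxA0} to see that $x_a(b)=a_0(b)$ (since $(a,b)=0$) is an axis of norm $1$, then exhibit two distinct axes with Frobenius pairing $1$ to contradict quasi-definiteness. The difference is in the choice of the offending pair. The paper pairs $a_0(b)$ with $b$ itself: by orthogonality of the eigenspaces of $\ad_a$, $(a_0(b),b)=(a_0(b),a_0(b))=1$, and $a_0(b)\neq b$ because $b\notin A_0(a)$ while $a_0(b)\in A_0(a)$ --- two lines, no extra computation. You instead pair $b$ with $\tau_a(b)=a_0(b)-a_{1/2}(b)$, which forces you to additionally establish $a_{1/2}(b)^2=0$ and hence $(a_{1/2}(b),a_{1/2}(b))=0$ via Lemma~\ref{help}, and to invoke that the Miyamoto involution is an automorphism so that $\tau_a(b)$ is an axis. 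Both routes are valid and of comparable depth; the paper's is just shorter because the orthogonality $(a_0(b),a_{1/2}(b))=(a_0(b),a)=0$ does all the work without the isotropy step. Two small remarks: your explicit disposal of the case $a_0(b)=0$ is a point the paper leaves to its earlier observation that $x_a(b)\neq0$, so that is a welcome addition; on the other hand, the parenthetical claim that $(a_0(b),a_0(b))=(1-(a,b))(a_0(b),a)$ follows from~\eqref{a0^2} is wrong (the right-hand side vanishes by eigenspace orthogonality) --- fortunately you immediately replace it with the correct computation $(a_0(b),a_0(b))=(x_a(b),x_a(b))=1$ from Lemma~\ref{id}, so the proof stands; just delete that aside.
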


\begin{proof}
If $b\in A_0(a)$, then $(a,b) = 0$ by definition.

Assume that $b\not\in A_0(a)$. Since $(a,b)=0$, we have $x_a(b) = a_0(b)$. 
By Proposition~\ref{AxA0}, $a_0(b)$ is an axis of $A$. 
Therefore, applying orthogonality of distinct eigenspaces 
due to the Frobenius form, we compute
$$
(a_0(b),b)
= (a_0(b),a_0(b) + a_{1/2}(b) + \alpha a)
= (a_0(b),a_0(b))
= 1.
$$
Hence, we get a~contradiction with the definition of quasi-definite axial algebra. 
\end{proof}

\begin{lemma}\label{lem:SmallerWordsToAxis}
Let $A$ be a quasi-definite strongly axial algebra of Jordan type $1/2$ 
with a~generating set $X$ of axes.
Let $w \in X^*\setminus\{1\}$.
Then there exists a~linear combination~$s$ of words of the length 
less than $|w|$ such that $q = w+s$ is (up to a~nonzero scalar) an axis of $A$.
\end{lemma}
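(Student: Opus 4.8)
The plan is to prove this by induction on the word length $|w|$, using the fact (Proposition~\ref{AxA0}) that elements of the form $x_a(b)$ are axes to "straighten" a word into an axis modulo shorter words.

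First I would set up the base case $|w| = 1$: here $w = x$ for some $x \in X$, which is already an axis, and we take $s = 0$. For the inductive step, write $w = w' y$ where $y \in X$ is the last letter and $w'$ is a word with $|w'| = |w|-1$. By the inductive hypothesis applied to $w'$, there is a linear combination $s'$ of words shorter than $|w'|$ such that $a := w' + s'$ is (up to a nonzero scalar) an axis of $A$. Then $w = ay - s'y$, and since each term of $s'y$ is a word of length at most $|w'| < |w|$, it suffices to express $ay$ as an axis plus shorter words. So the problem reduces to the following: given an axis $a$ and a generator $y$ (also an axis), show that $ay$ equals, up to nonzero scalar, an axis plus a linear combination of $a$ and $y$ — which are words of length $1 < |w|$ as long as $|w| \geq 2$, hence are "shorter than $|w|$."

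For this last step I would use the identity relating $ab$ to $x_a(b)$. By the definition~\eqref{formula:xa(b)}, if $(a,y) \neq 1$ then
$$
x_a(y) = \frac{2ay - (a,y)a - y}{(a,y)-1},
$$
so that
$$
ay = \frac{(a,y)-1}{2} x_a(y) + \frac{(a,y)}{2} a + \frac{1}{2} y,
$$
and by Proposition~\ref{AxA0} the element $x_a(y)$ is a (primitive) axis of $A$ — giving exactly the desired form, since $(a,y)-1 \neq 0$. The remaining case is $(a,y) = 1$, where $x_a(y)$ is undefined; but then by Proposition~\ref{prop:2-gen}a applied to $\langle a, y\rangle_{\alg}$, either $\sigma := ay - (a+y)/2 = 0$, in which case $ay = (a+y)/2$ is already a linear combination of shorter words (so $q = a + y$ up to scalar — wait, this does not produce an axis). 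This degenerate subcase needs more care: when $(a,y)=1$ and $\sigma = 0$ we have $ay = (a+y)/2$; but $a$ itself is an axis and $w = a y - s' y = (a+y)/2 - s'y$, so $q := w - y/2 + s'y = a/2$ is an axis up to scalar, using words $y$ and the words in $s'y$, all of length less than $|w|$.

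The main obstacle will be the bookkeeping around word lengths in the degenerate case $(a,y)=1$, and more fundamentally justifying that $a$ and $y$ "count as shorter words": this works only because $a = w' + s'$ is itself a combination of words of length $\le |w'| = |w|-1 < |w|$, and $y$ has length $1$, which is $< |w|$ precisely because $w = w'y$ has $|w| = |w'|+1 \geq 2$ — so one should handle $|w|=1$ as the base case and note $|w| \geq 2$ throughout the inductive step. I would also need to double-check that when $(a,y)=1$, the algebra $\langle a,y\rangle_{\alg}$ cannot fall into case~(1) of Proposition~\ref{prop:2-gen}a (where $\alpha = 0$), which is immediate since $(a,y) = 1 \neq 0$; thus only the subcase $\sigma = 0$, $\alpha = 1$ occurs, keeping the case analysis finite and clean.
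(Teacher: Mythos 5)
Your proposal follows essentially the same route as the paper's proof: induction on $|w|$, peeling off the last letter $w=w'y$, applying the inductive hypothesis to $w'$ to replace it by an axis modulo shorter words, and then resolving the resulting length-two product $ay$ via the identity $ay = \tfrac{(a,y)-1}{2}x_a(y)+\tfrac{(a,y)}{2}a+\tfrac12 y$ together with Proposition~\ref{AxA0}. The main line is correct, and your bookkeeping about why $a$ and $y$ count as ``shorter than $|w|$'' is exactly the point the paper glosses over.

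The only place you go astray is the degenerate case $(a,y)=1$. First, it is vacuous here: both $a$ and $y$ are axes of the quasi-definite algebra $A$, so $(a,y)=1$ forces $a=y$ by definition, in which case $ay=a$ is already an axis and nothing needs to be done. Second, the case analysis you substitute for this observation is not actually complete: Proposition~\ref{prop:2-gen}a only characterizes when $\langle a,y\rangle_{\alg}$ is $2$-dimensional, and with $\alpha=1$ the subalgebra could a priori be $3$-dimensional with $\sigma\neq 0$ (the non-unital case of Proposition~\ref{prop:2-gen}b), which your argument does not cover. So you should delete that detour and simply invoke quasi-definiteness; as written, the gap sits in a case that cannot occur, so the proof still goes through.
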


\begin{proof}
Let us prove by induction on $|w|$. If $|w| = 1$, then the statement is trivial. 
When $w = ab$ for $a,b\in X$, it follows by the formula~\eqref{formula:xa(b)}, 
where we take $q = x_a(b)$.

Suppose that $w = w_1a$ for some $w_1\in X^*$ and $a\in X$.
The induction hypothesis states that there exist a~linear combination~$s_1$ 
of words of the length less than $|w_1|$ and a nonzero scalar~$\alpha$ 
such that $q_1 = \alpha(w_1+s_1)$ is an axis of $A$.
Thus, the following equality
$w+s_1a = (w_1+s_1)a = (1/\alpha)q_1a$ 
reduces the general case to the already studied case when $|w| = 2$.
\end{proof}

\section{Sufficient conditions to contain unit}

\begin{lemma}\label{ed1}
Let $A$ be an axial algebra of Jordan type $1/2$, and let $a$ be an axis in~$A$
such that $A_0(a)$ contains a~unit $e_0(a)$. 
Then we have $(e_0(a)+a,b)=1$ for every axis $b\in A$ satisfying 
the condition that $(a,b)\neq1$.
\end{lemma}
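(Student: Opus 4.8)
The plan is to decompose $b$ with respect to the axis $a$, namely $b = a_0(b) + a_{1/2}(b) + (a,b)a$, and to use the unit $e_0(a)$ of $A_0(a)$ together with Lemma~\ref{lem:unit} applied \emph{inside} $A_0(a)$. First I would observe that $x_a(b) = a_0(b)/(1-(a,b))$ is a nonzero element of $A_0(a)$, and, by Lemma~\ref{id}, it is an idempotent with $(x_a(b),x_a(b)) = 1$; since $(a,b)\neq 1$ it is a genuine primitive semisimple idempotent in $A$ by Lemma~\ref{Axe}. The key point is that $e_0(a)$, being the unit of $A_0(a)$, acts as identity on $x_a(b)\in A_0(a)$, so $e_0(a)\cdot x_a(b) = x_a(b)$, hence $e_0(a)\cdot a_0(b) = a_0(b)$.

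Next I would compute $(e_0(a)+a,\,b)$ by expanding $b$ into its $a$-eigencomponents and using invariance and orthogonality of distinct eigenspaces with respect to the Frobenius form. Since $e_0(a)\in A_0(a)$, it is orthogonal to $a_{1/2}(b)\in A_{1/2}(a)$ and to $a\in A_1(a)$, so $(e_0(a),b) = (e_0(a),a_0(b))$. Likewise $(a,b) = (a,b)(a,a) = (a,b)$ picks out only the $A_1(a)$-component, and $(a,a_0(b)) = (a,a_{1/2}(b)) = 0$. Thus
$$
(e_0(a)+a,b) = (e_0(a),a_0(b)) + (a,b).
$$
It remains to show $(e_0(a),a_0(b)) = 1-(a,b)$, i.e.\ that $(e_0(a),a_0(b)) = 1-(a,b)$; equivalently, using $a_0(b) = (1-(a,b))x_a(b)$, that $(e_0(a),x_a(b)) = 1$.

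The heart of the argument is therefore the identity $(e_0(a), x_a(b)) = 1$. Here I would invoke that $e_0(a)$ is the unit of the axial algebra $A_0(a)$ and that $x_a(b)$ is an axis of $A_0(a)$: this follows from Lemma~\ref{Axe} (giving that $x_a(b)$ is a primitive semisimple idempotent of $A$ with spectrum in $\{0,1/2,1\}$) together with Lemma~\ref{axisA2B} applied to the subalgebra $A_0(a)$, once one checks $x_a(b)\in A_0(a)$, which is immediate from~\eqref{formula:xa(b)}. Granting that $A_0(a)$ is an axial algebra of Jordan type $1/2$ with unit $e_0(a)$ and $x_a(b)$ an axis in it, Lemma~\ref{lem:unit} applied to $A_0(a)$ yields $(e_0(a),x_a(b)) = 1$, where the Frobenius form is that of $A_0(a)$; but by uniqueness of the Frobenius form (as used in Lemma~\ref{lem:radA0}) this agrees with the restriction of the form on $A$. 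Combining the displayed equation with $(e_0(a),x_a(b))=1$ gives $(e_0(a)+a,b) = (1-(a,b)) + (a,b) = 1$, as desired.

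The main obstacle I anticipate is the subtle point that Lemma~\ref{lem:unit} requires $A_0(a)$ to actually be an axial algebra of Jordan type $1/2$ in order to talk about \emph{its} axes and \emph{its} Frobenius form; one must be careful that this hypothesis is available (it is not assumed in the statement of Lemma~\ref{ed1}). If $A_0(a)$ is not a priori known to be axial of Jordan type $1/2$, I would instead argue directly: since $e_0(a)$ is a unit of $A_0(a)$ and $x_a(b)\in A_0(a)$, we have $e_0(a)\,x_a(b) = x_a(b) = (x_a(b))^2$, so using invariance of the form, $(e_0(a),x_a(b)) = (e_0(a), (x_a(b))^2) = (e_0(a)\,x_a(b), x_a(b)) = (x_a(b),x_a(b)) = 1$ by Lemma~\ref{id}. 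This bypasses any structural assumption on $A_0(a)$ and is the cleanest route; everything else is the routine eigenspace-orthogonality bookkeeping sketched above.
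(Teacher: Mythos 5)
Your proof is correct, and the direct argument you give in your final paragraph is the one to keep: the identity $(e_0(a),x_a(b)) = (e_0(a),x_a(b)^2) = (e_0(a)x_a(b),x_a(b)) = (x_a(b),x_a(b)) = 1$ is exactly the key step in the paper's own proof, and you are right to flag that the detour through Lemma~\ref{lem:unit} applied to $A_0(a)$ is not licensed by the hypotheses of Lemma~\ref{ed1}, since nothing there guarantees that $A_0(a)$ is itself an axial algebra of Jordan type $1/2$. Where you diverge from the paper is in the bookkeeping surrounding that key step: you split $(e_0(a)+a,b)$ immediately into $(e_0(a),a_0(b))+(a,b)$ using the orthogonality of the eigenspaces $A_0(a)$, $A_{1/2}(a)$, $A_1(a)$, and then conclude via $a_0(b)=(1-(a,b))x_a(b)$; the paper instead first computes the product $(e_0(a)+a)b = b - \tfrac12 a_{1/2}(b) + e_0(a)a_{1/2}(b)$, writes $(e,b)=(eb,b)$, and then evaluates the two resulting terms $(a_{1/2}(b),a_{1/2}(b))$ and $(a_{1/2}(b),e_0(a)a_{1/2}(b))$ via the formula $a_{1/2}(b)^2 = \alpha a_0(b) + (\alpha-\alpha^2)a$ of Lemma~\ref{help}, before arriving at the same identity $(a_0(b),e_0(a))=(1-\alpha)(x_a(b),x_a(b))$. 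Your route is shorter and avoids Lemma~\ref{help} and the computation of $eb$ entirely, at the cost of nothing; the paper's route produces the explicit form of $eb$ along the way, which is not needed elsewhere. Both arguments are valid; there is no gap.
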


\begin{proof}
Denote $\alpha=(a,b)$, $a_0=a_0(b)$, $a_{1/2}=a_{1/2}(b)$.
Consider $e = e_0(a) + a$ and compute 
$$
eb 
= (e_0+a)(a_0+a_{1/2}+\alpha a)
= a_0 + e_0a_{1/2} + (1/2)a_{1/2} + \alpha a
= b - (1/2)a_{1/2} + e_0a_{1/2}.
$$ 
Further, using pairwise orthogonality of $A_0(a)$, $A_{1/2}(a)$, 
and $A_1(a)$, we deduce
\begin{equation} \label{(e,b)=1}
(e,b)=(eb,b)=(b,b - (1/2)a_{1/2} + e_0a_{1/2}) 
= 1 - (1/2)(a_{1/2},a_{1/2}) + (a_{1/2},e_0 a_{1/2}).
\end{equation}

We apply Lemmas~\ref{help} and~\ref{id} and the condition that $(a,b)\neq1$ to compute
\begin{multline*}
(a_{1/2}, e_0a_{1/2})
= (a_{1/2}^2,e_0)
= (\alpha a_0 + \alpha(1-\alpha)a,e_0)
= \alpha(a_0,e_0) \\
= \alpha(1-\alpha)(x_a(b),e_0)
= \alpha(1-\alpha)(x_a(b)^2,e_0)
= \alpha(1-\alpha)(x_a(b),x_a(b))
= \alpha(1-\alpha), 
\end{multline*}
$$
(a_{1/2},a_{1/2})
= 2(aa_{1/2},a_{1/2})
= 2(a,a_{1/2}^2)
= 2(a,\alpha a_0+\alpha(1-\alpha)a)
= 2\alpha (1-\alpha).
$$
Hence, $(e,b) = 1$.
\end{proof}

\begin{lemma}\label{ed}
Let $A$ be a semisimple axial algebra of Jordan type $1/2$ 
with a~quasi-definite basis~$X$. 
Suppose that $A_0(x_i)$ contains a unit $e_0(x_i)$ for each $x_i\in X$. 
Then $e=e_0(x_1) + x_1$ is a~unit of~$A$.
\end{lemma}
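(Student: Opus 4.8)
The plan is to show that $e = e_0(x_1) + x_1$ acts as the identity on each basis axis $x_i \in X$; since $X$ spans $A$, this suffices. For $i = 1$ this is immediate: $e x_1 = e_0(x_1) x_1 + x_1^2 = 0 + x_1 = x_1$, using that $x_1 \in A_1(x_1)$ and $A_0(x_1) A_1(x_1) = (0)$ by the fusion rules. So fix $i \neq 1$ and write $\alpha = (x_1, x_i)$, together with the decomposition $x_i = (x_1)_0 + (x_1)_{1/2} + \alpha x_1$ relative to the axis $a := x_1$. The quasi-definiteness of the basis gives $\alpha \neq 1$, which is exactly the hypothesis needed to invoke Lemma~\ref{ed1}.

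First I would apply Lemma~\ref{ed1} with $a = x_1$ and $b = x_i$: since $A_0(x_1)$ has the unit $e_0(x_1)$ and $(x_1, x_i) \neq 1$, the lemma yields $(e, x_i) = 1$, i.e.\ $(e x_i - x_i, x_i) = 0$. More than that, the computation inside the proof of Lemma~\ref{ed1} shows that $e x_i = x_i - \tfrac12 (x_1)_{1/2} + e_0(x_1)\,(x_1)_{1/2}$, so the ``error'' $e x_i - x_i$ lies in $A_{1/2}(x_1)$, because $(x_1)_{1/2} \in A_{1/2}(x_1)$ and $e_0(x_1)(x_1)_{1/2} \in A_{1/2}(x_1)$ by the Seress Lemma (or directly since multiplication by elements of $A_0(x_1)$ preserves $A_{1/2}(x_1)$ by the fusion rules). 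Thus $d_i := e x_i - x_i \in A_{1/2}(x_1)$, and I would like to conclude $d_i = 0$.

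The main obstacle is upgrading the single scalar identity $(d_i, x_i) = 0$ to the vanishing of the whole vector $d_i$. The idea is to pair $d_i$ against \emph{every} basis axis, not just $x_i$, and then use semisimplicity: $R(A) = A^\perp = (0)$, so it is enough to prove $(d_i, x_j) = 0$ for all $j$. For this I would run the symmetric argument with the roles of $a$ switched to $x_j$: by Corollary~\ref{A0-QDBasis} each $A_0(x_j)$ is again an axial algebra of Jordan type $1/2$ with a quasi-definite basis, and one checks (using Lemma~\ref{lem:radA0} and semisimplicity of $A$, which forces $R(A_0(x_j)) = (0)$) that the hypotheses of the present lemma propagate; invariance of the Frobenius form then lets me rewrite $(e x_i - x_i, x_j) = (x_i, e x_j - x_j)$ and exploit that $e x_j - x_j \in A_{1/2}(x_j)$ is orthogonal to $\alpha(x_j) x_j$ and to the $A_0(x_j)$-part of $x_i$. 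Carrying this out pairs $d_i$ with a spanning set of axes and shows $d_i \in A^\perp = (0)$. Hence $e x_i = x_i$ for all $i$, and $e$ is a two-sided (hence, by commutativity, genuine) unit of $A$. I expect the delicate point to be making the symmetry/orthogonality bookkeeping in this last step fully rigorous, since it is where the quasi-definiteness of the \emph{basis} (rather than merely of the algebra) and semisimplicity are both essential.
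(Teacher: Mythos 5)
Your overall strategy --- show that $d_i := e x_i - x_i$ is orthogonal to every basis axis and then invoke $A^\perp = R(A) = (0)$ --- is in the right spirit, and the first steps are fine: $e x_1 = x_1$, and the computation inside Lemma~\ref{ed1} does give $d_i = e_0(x_1)\,(x_1)_{1/2}(x_i) - \tfrac12 (x_1)_{1/2}(x_i) \in A_{1/2}(x_1)$ with $(d_i, x_i) = 0$. The gap is in the step that was supposed to give $(d_i, x_j) = 0$ for $j \neq i$. After rewriting $(d_i, x_j) = (x_i, e x_j - x_j) = (x_i, d_j)$ you assert that $e x_j - x_j$ lies in $A_{1/2}(x_j)$; it does not --- your $e$ is built from $x_1$, so $d_j$ lies in $A_{1/2}(x_1)$, and $(x_i, d_j)$ only reduces to $\bigl((x_1)_{1/2}(x_i), d_j\bigr)$, a pairing of two elements of $A_{1/2}(x_1)$ with no visible reason to vanish (unwinding it leads to terms like $(e_0(x_1), uv)$ for $u,v \in A_{1/2}(x_1)$, which are not controlled by anything proved so far). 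To make $e x_j - x_j$ land in $A_{1/2}(x_j)$ you would have to replace $e$ by $\tilde e = e_0(x_j) + x_j$, but then $(x_i, \tilde e x_j - x_j) = 0$ holds trivially and tells you nothing about $(d_i, x_j)$ unless you already know $e = \tilde e$. So the ``symmetry/orthogonality bookkeeping'' you flagged as delicate is exactly where the argument breaks.

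The paper closes this hole by putting a different element into the radical. For each $j$ set $\tilde e = e_0(x_j) + x_j$. Lemma~\ref{ed1}, applied once with $a = x_1$ and once with $a = x_j$ (legitimate because the basis is quasi-definite, so $(x_j, x_k) \neq 1$ for $k \neq j$, while $(\tilde e, x_j) = 1$ is immediate), gives $(e, x_k) = (\tilde e, x_k) = 1$ for every $x_k \in X$. Hence $e - \tilde e$ is orthogonal to the spanning set $X$, so $e - \tilde e \in A^\perp = (0)$ and $e = \tilde e$. Then $e x_j = \tilde e x_j = e_0(x_j) x_j + x_j^2 = x_j$ with no further computation, since $A_0(x_j) A_1(x_j) = (0)$. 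In your notation this says $d_j = 0$ at once; the identity $e = e_0(x_j) + x_j$ for all $j$ is the statement you actually need, and it comes for free from Lemma~\ref{ed1} plus semisimplicity.
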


\begin{proof}
Let $a,b\in X$ be distinct.
Denote $e = e_0(a)+a$ and $\tilde{e} = e_0(b)+b$.
From Lemma~\ref{ed1} it follows that 
$(e,x_i)=(\tilde{e},x_i) = 1$ for each $x_i\in X$.

Let $r=e-\tilde{e}$. 
For each $x_i\in X$, we have $(r,x_i)=0$. 

Since~$X$ is a linear basis (maybe, infinite) consisting of axes, 
$e-\tilde{e}$ lies in the radical $A^\perp$ of the Frobenius form. 
By $A^\perp = (0)$, we get $e = \tilde{e}$. Hence,
$$
eb = \tilde{e}b
= (e_0(b)+b)b = b^2 = b.
$$
It holds for every axis~$b\in X$, thus, $e$ is a unit of $A$.
\end{proof}

\begin{theorem} \label{Thm:FD2Unit}
Let $A$ be a semisimple finite-dimensional quasi-definite strongly axial 
algebra of Jordan type $1/2$. Then $A$ is unital.
\end{theorem}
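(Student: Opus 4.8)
The plan is to induct on $\dim A$, using Lemma~\ref{ed} as the engine and Corollary~\ref{A0-QDBasis} to feed the induction. The base case $\dim A = 1$ is clear: then $A = Fa$ for a single axis $a$, which is its own unit. For the inductive step, suppose $\dim A > 1$ and fix any axis $a \in A$ (one exists, since $A$ is spanned by axes). Consider the subalgebra $A_0(a)$. By Corollary~\ref{A0-QDBasis}, $A_0(a)$ is an axial algebra of Jordan type $1/2$ which has a quasi-definite basis of axes; it is also strongly axial, being a subalgebra of a strongly axial algebra (the fusion laws descend as in Lemma~\ref{axisA2B}, and primitivity of semisimple idempotents is inherited since the property is about eigenspace decompositions). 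Moreover $\dim A_0(a) < \dim A$, since $a \notin A_0(a)$. By Lemma~\ref{lem:radA0}, $R(A_0(a)) = (0)$, so $A_0(a)$ is semisimple.

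To apply the inductive hypothesis to $A_0(a)$ I need $A_0(a)$ finite-dimensional (clear) and quasi-definite in the sense of the definition, not merely in possession of a quasi-definite basis. Here is the first real point of care: Corollary~\ref{A0-QDBasis} only gives a quasi-definite \emph{basis}, and the Example in \S4 shows that passing from a quasi-definite basis to a genuinely quasi-definite algebra is not automatic. So the cleaner route is to strengthen the induction: I would instead prove by induction on dimension the statement ``every finite-dimensional semisimple strongly axial algebra of Jordan type $1/2$ \emph{admitting a quasi-definite basis of axes} is unital,'' which applies to $A$ itself since a quasi-definite algebra admits a quasi-definite basis (any spanning set of axes can be thinned to a basis, and pairwise values $(x,y) \neq 1$ are preserved). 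With this formulation, Corollary~\ref{A0-QDBasis} delivers exactly the hypothesis needed for $A_0(a)$, and the induction goes through. Applying it, $A_0(a)$ has a unit; call it $e_0(a)$.

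Now I would like to invoke Lemma~\ref{ed}, but that lemma requires $A_0(x_i)$ to be unital for \emph{every} $x_i$ in a quasi-definite basis $X$ of $A$, not just for the one axis $a$. So the step before the finish is: for each $x_i \in X$, the same argument as above — $A_0(x_i)$ is a finite-dimensional semisimple strongly axial algebra of Jordan type $1/2$ with a quasi-definite basis of axes, of strictly smaller dimension — shows by the (strengthened) inductive hypothesis that $A_0(x_i)$ contains a unit $e_0(x_i)$. Then all hypotheses of Lemma~\ref{ed} are met, and it yields that $e = e_0(x_1) + x_1$ is a unit of $A$, completing the induction.

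The main obstacle I anticipate is precisely the bookkeeping around ``quasi-definite'' versus ``has a quasi-definite basis'': the Example warns that these genuinely differ for the $x_a(b)$ construction, so one must not carelessly assert $A_0(a)$ is quasi-definite. Reformulating the inductive statement in terms of possessing a quasi-definite basis sidesteps this, at the cost of checking that each $A_0(x_i)$ inherits strong axiality and semisimplicity (the former by the subalgebra argument in the style of Lemma~\ref{axisA2B}, the latter by Lemma~\ref{lem:radA0}). Everything else is a direct chaining of Corollary~\ref{A0-QDBasis}, the dimension drop $\dim A_0(x_i) < \dim A$, and Lemma~\ref{ed}.
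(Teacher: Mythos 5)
Your overall architecture (induct on dimension, feed Corollary~\ref{A0-QDBasis} into Lemma~\ref{ed}) matches the paper's, but there is a genuine gap at the point where you pass the hypotheses down to $A_0(a)$. You assert that $A_0(a)$ is strongly axial ``being a subalgebra of a strongly axial algebra.'' This does not follow, and the paper explicitly warns in its proof that $A_0(a_0)$ ``may be not necessarily quasi-definite and strongly axial.'' The Lemma~\ref{axisA2B}-style argument goes the wrong way: it shows that an element which is an axis of $A$ restricts to an axis of a subalgebra $B$, whereas strong axiality of $B$ is a statement about idempotents that are primitive and semisimple \emph{as operators on $B$}; such an idempotent need not be primitive or semisimple on all of $A$, so strong axiality of $A$ cannot be invoked for it. Your strengthened induction hypothesis has a second leak: to iterate, you must extract a quasi-definite basis of $B_0(x_i)$ from a quasi-definite basis of $B$, i.e.\ you need $(x_{x_i}(x_j),x_{x_i}(x_k))\neq 1$ for $j\neq k$, and the Example in \S4 shows exactly this can fail when one only controls the pairings within the old basis. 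In the paper this is rescued because the new basis vectors are axes of the ambient quasi-definite algebra $A$, not merely of the subalgebra, so any two distinct ones automatically pair to a value other than $1$.

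The paper's device is to never make the subalgebras carry the hypotheses at all: it builds the nested chain $A_0(a_0,\ldots,a_n)$ whose bases at every level consist of iterated elements $x_y(z)$ with $y,z$ axes of $A$; Proposition~\ref{AxA0} (applied in $A$, with $B$ the relevant subalgebra) makes these axes of every subalgebra in sight, quasi-definiteness of $A$ makes every such basis quasi-definite, and Lemma~\ref{lem:radA0} gives semisimplicity at each level; only then is Lemma~\ref{ed} applied, which needs nothing beyond semisimplicity and a quasi-definite basis. Your proof would be repaired by the same device: keep every constructed element anchored as an axis of $A$ itself, rather than attempting to transport ``strongly axial'' and ``quasi-definite'' to the subalgebras.
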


\begin{proof}
Denote $N = \dim A$.
Let $C_0$ be a~basis of $A$ consisting of axes. 
Fix $a_0\in C_0$, by Corollary~\ref{A0-QDBasis}, 
one may find a~quasi-definite basis 
$C_1 \subset \{x_{a_0}(y)\mid y\in C_0\setminus\{a_0\}\}$ 
of an axial algebra $A_0(a_0)$, which consist of axes of~$A$ by Proposition~\ref{AxA0}.
Let us indicate that $A_0(a_0)$ may be not necessarily quasi-definite and strongly axial algebra.
However, we do not need such conditions!
Throughout the proof, we consider only elements of the form $x_y(z)$, 
where $y,z$ are axes in~$A$, thus, Proposition~\ref{AxA0} 
implies that all such elements are axes in~$A$ as well as 
in all required axial subalgebras of~$A$.
Note that $A_0(a_0)$ is semisimple by Lemma~\ref{lem:radA0}.

Now, we choose $a_1 \in C_1$, by Corollary~\ref{A0-QDBasis}
one may find a~quasi-definite basis 
$C_2 \subset \{x_{a_1}(y)\mid y\in C_1\setminus\{a_1\}\}$ 
of an axial algebra $A_0(a_0)$, which consist of axes of~$A$ again by Proposition~\ref{AxA0}.
We denote $(A_0(a_0))_0(a_1)$ as $A_0(a_0,a_1)$.

Continue on, since the choice of $a_0,\ldots,a_n,\ldots$
is not unique, we get a finite set of algebras of the form 
$A_0(a_0,\ldots,a_n)$, where $n<N$ and $A_0(a_0,\ldots,a_n)$ 
is defined by induction as $(A_0(a_0,\ldots,a_{n-1}))_0(a_n)$. 
Now, we prove by induction on the dimension that all of them are unital. 
If $\dim A_0(a_0,\ldots,a_n) = 1$, then $A_0(a_0,\ldots,a_n)\cong F$ and hence is unital.

Suppose it is proved that $A_0(a_0,\ldots,a_n)$ is unital if 
$\dim A_0(a_0,\ldots,a_n)<k$. 
Consider the case when $\dim Y = k$, where 
$Y = A_0(a_0,\ldots,a_n)$. 
Applying Lemma~\ref{ed} and the induction hypothesis, 
we conclude that $Y$~is unital. 
Therefore, by induction, $A_0(a)$ is unital for every $a\in C_0$. 
Then $A$ is unital by Lemma~\ref{ed}.
\end{proof}

\section{Finite capacity of unital algebras}

\begin{definition}
Let $e$ be a unit of an axial algebra $A$ of Jordan type $1/2$.
Suppose that $e$~equals a sum of $n$~pairwise orthogonal axes 
and $n$~is minimal satisfying such property. 
We say that $e$ has the capacity~$n$ and write it as $c(e)=n$. 
\end{definition}

For the simple Jordan algebra $A = F1 + Fu + Fv$ of a nondegenerate form 
defined above, we have
$e = (1/2 + (u+v)/\sqrt{2}) + (1/2 - (u+v)/\sqrt{2})$,
it is a decomposition  of $e$ into a sum of two axes.
Thus, $c(e) = 2$, and all decompositions of~$e$ into 
a~finite sum of pairwise orthogonal axes consist of only two summands.

Something similar we have with the algebra~$M_n^{(+)}$, now $c(e) = n$, 
and there is a~natural decomposition $e = e_{11} + \ldots + e_{nn}$.
Note that the decomposition is not unique, e.\,g., 
$e = (e_{11} + e_{12}) + (e_{22} - e_{12})$ for $n=2$.

\begin{lemma}\label{predUnitProperty}
Let $A$ be a quasi-definite strongly axial algebra of Jordan type $1/2$. 
Let $q,a,b$ be axes in~$A$. 
Suppose that $qa = qx_a(b) = 0$. Then $qb = 0$.
\end{lemma}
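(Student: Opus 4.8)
The hypotheses are symmetric in a certain sense: $a$ and $x_a(b)$ are both axes with $qa=qx_a(b)=0$, so $q\in A_0(a)\cap A_0(x_a(b))$. The goal $qb=0$ means $b\in A_0(q)$, and by Lemma~\ref{zeroaxis} this is equivalent to $(q,b)=0$. So the plan is to reduce everything to a computation of the Frobenius form $(q,b)$.

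First I would write the Peirce decomposition of $b$ relative to $a$: $b=a_0(b)+a_{1/2}(b)+\alpha a$ with $\alpha=(a,b)$. If $\alpha=1$ then $b=a$ (in the quasi-definite setting distinct axes have $(a,b)\neq 1$), so assume $b\neq a$ and hence $\alpha\neq 1$; the case $b=a$ gives $qb=qa=0$ trivially. If $\alpha=0$ then $b\in A_0(a)$ by Lemma~\ref{zeroaxis}, and since $(a,b)=0$ we get $x_a(b)=a_0(b)=b$, so $qb=qx_a(b)=0$ directly. So we may assume $\alpha\neq 0,1$, which is exactly the hypothesis needed to invoke Lemma~\ref{xa(b)PrimInA0} and the identities of Lemma~\ref{help}. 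Now $x_a(b)=a_0(b)/(1-\alpha)$, so $qx_a(b)=0$ forces $q\,a_0(b)=0$. Since $q\in A_0(a)$ and Seress's Lemma gives $q(xz)=(qx)z$ for $z\in A_0(q)\oplus A_1(q)$... more usefully, compute $qb=q\,a_0(b)+q\,a_{1/2}(b)+\alpha\,qa=q\,a_{1/2}(b)$, using $q\,a_0(b)=0$ and $qa=0$. So it remains to show $q\,a_{1/2}(b)=0$.

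For this I would pair with $b$ and use invariance: $(qb,b)=(q,b^2)=(q,b)$. On the other hand $(qb,b)=(q\,a_{1/2}(b),b)=(q\,a_{1/2}(b),a_{1/2}(b))$ since $q\,a_{1/2}(b)\in A_{1/2}(a)$ (it lies in $A_{1/2}(a)$ because $q\in A_0(a)$, $a_{1/2}(b)\in A_{1/2}(a)$, and the fusion rule $A_0(a)A_{1/2}(a)\subseteq A_{1/2}(a)$) which is orthogonal to $a_0(b)$ and to $a$. Thus $(q,b)=(q\,a_{1/2}(b),a_{1/2}(b))=(q,a_{1/2}(b)^2)=(q,\alpha\,a_0(b)+\alpha(1-\alpha)a)$ by \eqref{a1/2^2}; and this equals $\alpha(q,a_0(b))+\alpha(1-\alpha)(q,a)=0$ since $q\,a_0(b)=0$ implies $(q,a_0(b))=(q\,a_0(b)/\text{(something)},\ldots)$—more directly $(q,a_0(b))=(q,(1-\alpha)x_a(b))=(1-\alpha)(q,x_a(b))=(1-\alpha)(q\,x_a(b),x_a(b))=0$ and $(q,a)=(qa,a)=0$. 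Hence $(q,b)=0$, so $b\in A_0(q)$ by Lemma~\ref{zeroaxis}, i.e. $qb=0$.

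The only mild subtlety—the place I would be most careful—is the fusion-rule bookkeeping that places $q\,a_{1/2}(b)$ in $A_{1/2}(a)$ (so that it is orthogonal to the other Peirce components), together with the degenerate cases $\alpha\in\{0,1\}$ which must be dispatched first so that $x_a(b)$ is genuinely defined and Lemma~\ref{help} applies cleanly. Everything else is a short orthogonality computation with the Frobenius form. Note Lemma~\ref{zeroaxis} is exactly where quasi-definiteness and strong axiality enter; the rest of the argument only uses that $A$ is an axial algebra of Jordan type $1/2$.
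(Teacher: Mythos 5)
Your proof is correct and follows the same essential strategy as the paper: reduce the claim to showing $(q,b)=0$ via invariance of the Frobenius form, then conclude $qb=0$ from Lemma~\ref{zeroaxis} (which is indeed the only place quasi-definiteness and strong axiality are needed). The paper reaches $(q,b)=0$ in one line by expanding $(\alpha-1)(q,x_a(b)) = (q,2ab-\alpha a-b)$ and using $(q,ab)=(qa,b)=0$, whereas you take a longer but equally valid detour through the Peirce decomposition of $b$ and the identity for $a_{1/2}(b)^2$ from Lemma~\ref{help}.
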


\begin{proof}
Denote $\alpha =(a,b)$.
By Lemma~\ref{zeroaxis}, we prove the statement,
$$
0 = (\alpha -1)(q,x_a(b))
= (q, 2ab-\alpha a - b)
= 2(q,ab) - (q,b)
= 2(qa,b) - (q,b)
= - (q,b). \qedhere
$$
\end{proof}

\begin{theorem}\label{lengthe}
Let $A$ be a quasi-definite strongly axial algebra of Jordan type $1/2$ and 
let $G = \{a_1,\ldots,a_n\}$ be a generating set of $A$ consisting of axes. 
Let $e$ be a unit of $A$. 
Then $e$ has a finite capacity $k\leq n$.
\end{theorem}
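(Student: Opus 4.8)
The plan is to induct on the number of generators $n$, peeling off one axis at a time and using the subalgebra decomposition $A = A_0(a)\oplus A_{1/2}(a)\oplus\Span\{a\}$ together with the fact (Corollary~\ref{A0-QDBasis}, Theorem~\ref{Thm:FD2Unit} and its proof machinery) that $A_0(a)$ is again a quasi-definite strongly axial algebra of Jordan type $1/2$. First I would take $a = a_1 \in G$ and form the primitive axes $x_{a_1}(a_i)$ for $i = 2,\ldots,n$; by Proposition~\ref{AxA0} these are axes of $A$ lying in $A_0(a_1)$, and by Lemma~\ref{bazis}-style reasoning (adapted via Lemma~\ref{lem:SmallerWordsToAxis}) they generate $A_0(a_1)$ as an algebra. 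Thus $A_0(a_1)$ is generated by at most $n-1$ axes. The unit of $A_0(a_1)$ exists: write $e = e_0 + e_{1/2} + \lambda a_1$ with respect to $a_1$; since $ea_1 = a_1$ forces $e_{1/2} = 0$ and $\lambda = 1$, and since $ey = y$ for all $y$ forces $e_0$ to act as identity on $A_0(a_1)$, we get that $e_0(a_1) := e - a_1$ is a unit of $A_0(a_1)$. By induction, $e_0(a_1)$ has finite capacity $k' \le n-1$, say $e_0(a_1) = q_1 + \cdots + q_{k'}$ with the $q_j$ pairwise orthogonal axes lying in $A_0(a_1)$.

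Next I would assemble the decomposition $e = a_1 + q_1 + \cdots + q_{k'}$ and verify it is a sum of pairwise orthogonal axes. The $q_j$ are pairwise orthogonal by construction. Each $q_j$ lies in $A_0(a_1)$, so $a_1 q_j = 0$, giving orthogonality with $a_1$. Hence $e$ is a sum of $1 + k' \le 1 + (n-1) = n$ pairwise orthogonal axes, and the minimal such number $k = c(e)$ satisfies $k \le n$, as required. One subtle point: I must confirm that each $q_j$, which is an axis of $A_0(a_1)$, is also an axis of $A$ — but this follows because the proof of the inductive step only ever produces axes of the form $x_y(z)$ for $y,z$ axes of $A$ (exactly as in the proof of Theorem~\ref{Thm:FD2Unit}), and Proposition~\ref{AxA0} guarantees such elements are axes in $A$. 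So I would phrase the induction to carry the stronger statement "the capacity-realizing decomposition can be taken to consist of axes of $A$ of the form $x_{y}(z)$", mirroring the remark in the proof of Theorem~\ref{Thm:FD2Unit}.

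The base case $n = 1$ is immediate: if $A = \langle a_1\rangle_{\alg}$ then $A = \Span\{a_1\}$, the unit is $a_1$ itself, and $c(e) = 1$. The main obstacle I anticipate is not the orthogonality bookkeeping but ensuring $A_0(a_1)$ is genuinely generated by at most $n-1$ axes in a way compatible with the inductive hypothesis — i.e.\ that the generating set $\{x_{a_1}(a_i) : i \ge 2\}$ of axes really does generate all of $A_0(a_1)$ and not merely span a subspace. Here I would invoke Lemma~\ref{lem:SmallerWordsToAxis}: every word $w$ in the generators $a_1,\ldots,a_n$, after projection to $A_0(a_1)$, can be rewritten (modulo shorter words) as a scalar multiple of an axis obtained by iterating the $x_\bullet(\bullet)$ construction, and such axes lie in the subalgebra generated by $\{x_{a_1}(a_i)\}$; a downward induction on word length then shows this subalgebra is all of $A_0(a_1)$. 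Once that is in place, the induction closes cleanly and the capacity bound $k \le n$ follows.
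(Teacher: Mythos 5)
Your overall skeleton (peel off $a_1$, note that $e-a_1$ is a unit of $A_0(a_1)$, and induct) matches the paper's starting point, but the inductive step has a genuine gap, located exactly where you flag "the main obstacle". To invoke the theorem's inductive hypothesis on $A_0(a_1)$ you need (i) that $A_0(a_1)$ is generated as an algebra by the $n-1$ axes $x_{a_1}(a_2),\ldots,x_{a_1}(a_n)$, and (ii) that $A_0(a_1)$ is again a quasi-definite strongly axial algebra. Neither is available. For (i), Lemma~\ref{lem:SmallerWordsToAxis} rewrites a word in the \emph{original} generators as an axis of $A$ modulo shorter words; it says nothing about the $A_0(a_1)$-\emph{component} of such a word. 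Because of the fusion rule $A_{1/2}(a_1)^2\subseteq A_0(a_1)+A_1(a_1)$, the $A_0(a_1)$-part of a product $uv$ contains the term $\bigl(u_{1/2}v_{1/2}\bigr)_0$, which has no reason to lie in $\langle x_{a_1}(a_2),\ldots,x_{a_1}(a_n)\rangle_{\alg}$. Indeed, a $2$-generated subalgebra is at most $3$-dimensional by Proposition~\ref{prop:2-gen}, while $A_0(a)$ can easily be $4$-dimensional (e.g. $A_0(e_{11})\cong M_2^{(+)}(F)$ inside $M_3^{(+)}(F)$), so "the projections of the remaining generators generate $A_0(a_1)$" is false as a general principle; quasi-definiteness would have to do essential work here and your sketch does not supply it. For (ii), the paper explicitly warns in the proof of Theorem~\ref{Thm:FD2Unit} that $A_0(a_0)$ need not be quasi-definite or strongly axial, and arranges matters so that only axes of $A$ of the form $x_y(z)$ are ever used; so $A_0(a_1)$ cannot simply be fed back into the theorem as a smaller instance. (A lesser issue: the theorem does not assume finite dimension, so without (i) you cannot even assert that $A_0(a_1)$ is finitely generated.)

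The paper closes exactly this gap by a different mechanism: it never claims that the shrinking sets $G_2,G_3,\ldots$ generate the corresponding subalgebras. It iterates until the set is a singleton (after at most $n$ steps, since $|G_{j+1}|=|G_j|-1$), obtaining $e_{k+1}=e-(a_1+b_1+\cdots+d_1)$, and then proves $e_{k+1}=0$ directly: any axis $q$ of $A_{k+1}$ satisfies $qa=qx_a(b)=0\Rightarrow qb=0$ (Lemma~\ref{predUnitProperty}), which unwinds the nested $x_\bullet(\bullet)$ constructions all the way back to $qa_i=0$ for every original generator; then Lemma~\ref{lem:SmallerWordsToAxis}, the Seress Lemma, and a minimal-length-word argument force $qA=0$, hence $q=q^2=0$, a contradiction. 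This annihilation argument is the essential content of the proof and is entirely absent from your proposal; without it, or a genuine substitute for (i) and (ii), your induction does not close.
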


\begin{proof}
Given a subalgebra $B$ of $A$ and $x\in B$, denote the eigensubspace 
of the operator $\ad_x\colon B\to B$ corresponding to the eigenvalue~0 by $A_0(x,B)$. 
Note that $e_2=e-a_1$ is a~unit of the algebra $A_2=A_0(a_1, A)$. Denote 
\begin{equation} \label{G2}
G_2(a_1) = \{b_1=x_{a_1}(a_2),\ldots,b_{n-1}=x_{a_1}(a_n)\}.
\end{equation}
By Corollary~\ref{A0-QDBasis}, the set $G_2$ contains a subset 
of the~quasi-definite basis of $A_2$. 
On the next step, we fix $b_1$ and project the remaining axes 
on the subalgebra $A_3 = A_0(b_1,A_2)$:
\begin{equation} \label{G3}
G_3(a_1,b_1) = \{x_{b_1}(b_2),\ldots,x_{b_1}(b_{n-1})\},
\end{equation}
and $e_3 = e_2 - b_1 = e - a_1 - b_1$ is a unit of~$A_3$.
By induction, on the last step, we get the set 
$G_k(a_1,b_1,\ldots,c_1)=\{d_1=x_{c_1}(c_2)\}$.

Define $e_{k+1} = e_k - d_1 = e - (a_1+b_1+\ldots+c_1+d_1)$.
If $e_{k+1} = 0$, then $e$ can be represented in the required form.

Suppose that $e_{k+1}\neq0$. 
Then by Corollary~\ref{A0-QDBasis}, $A_{k+1} = A_0(d_1,A_k)$ 
is an axial algebra with a~basis of axes from~$A$. 
Take~$q$ such an axis.
Then, by the definition, $qd_1=0$ and $qc_1 = 0$. 
By Lemma~\ref{predUnitProperty}, $qc_2=0$. 

Suppose that $c_1 = x_{f_1}(f_2)$ and $c_2 = x_{f_1}(f_3)$.
By the definition, $qf_1 = 0$. Since $qc_2 = 0$, then again 
by Lemma~\ref{predUnitProperty}, $qf_2 = 0$.
Continuing the procedure, we get $qa_i=0$ for every $1\leq i\leq n$. 

If $qA = 0$, then $q\cdot q = q = 0$, a contradiction.

Suppose that $qA \neq 0$. 
Let $w$ be a word in the alphabet~$G$ of a~minimal length 
such that $qw\neq 0$. Let $w = w_1w_2$. 
By the assumption, $qw_1 = qw_2 = 0$. 
Let $s$ be a~linear combination of words of the length less than $|w_1|$ 
and $\alpha\in F\setminus\{0\}$ such that $\alpha(w_1+s) = r$ 
for some axis $r$ (see Lemma~\ref{lem:SmallerWordsToAxis}). 
By the assumption, we have $qr = q(sw_2) = 0$. 
By Seress Lemma,
$$
\alpha q(w_1w_2) 
= \alpha q(w_1w_2) \pm \alpha q(sw_2)
= q(rw_2)
= r(qw_2)
= 0,
$$
a contradiction.
\end{proof}

Let $A$ be a finitely generated quasi-definite strongly axial 
algebra of Jordan type~$1/2$ and let $G = \{a_1,\ldots,a_n\}$ 
be a generating set of~$A$ consisting of axes. 
Moreover, let $y_1,\ldots,y_s\in A$ be axes such that $y_1\in G$,
$y_2\in G_2(y_1)$, \ldots, $y_s\in G_s(y_1,\ldots,y_{s-1})$,
where the sets $G_i(y_1,\ldots,y_{i-1})$ 
are defined by~\eqref{G2},~\eqref{G3} etc. 
Denote the subalgebra 
$A_0(y_1)\cap A_0(y_2)\cap \ldots \cap A_0(y_s)$
by $A_0(y_1,\ldots,y_s)$ and let us call it as a {\it special subalgebra} of $A$.

Given a special subalgebra $B = A_0(y_1,\ldots,y_s)$, let us call 
$y_{s+1}\in G_{s+1}(y_1,\ldots,y_s)$ as a~special axis in $B$. 
Actually, $y_{s+1}$ is an axis in~$A$ too.

\begin{corollary}\label{series}
Let $A$ be a quasi-definite strongly axial algebra of Jordan type $1/2$ and 
$G=\{a_1,\ldots,a_n\}$ be a~generating set of $A$ consisting of axes. 
Let $e$~be a~unit of~$A$. Then there exists a~sequence 
of special subalgebras for $k\leq n$,
$$
A\supset A_0(y_1)\supset A_0(y_1,y_2)
\ldots \supset A_0(y_1,\ldots,y_k)
\supset A_0(y_1,\ldots,y_{k+1}) = (0).
$$
\end{corollary}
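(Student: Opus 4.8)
The plan is to apply Theorem~\ref{lengthe} essentially verbatim: its proof already constructs, for a unital algebra $A$ with generating set $G = \{a_1,\ldots,a_n\}$ of axes, a descending chain of subalgebras $A = A_1 \supset A_2 \supset A_3 \supset \cdots$, where at each stage one fixes a special axis and passes to its $0$-eigenspace. What I want to do is recognize that these successively-constructed subalgebras are exactly the special subalgebras $A_0(y_1,\ldots,y_s)$ introduced just before the corollary, and that the termination argument in Theorem~\ref{lengthe} says precisely that the chain reaches $(0)$ in at most $n$ steps.

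First I would unwind the notation. In the proof of Theorem~\ref{lengthe} one sets $A_2 = A_0(a_1,A)$ with unit $e_2 = e - a_1$, then picks $b_1 \in G_2(a_1)$ and sets $A_3 = A_0(b_1, A_2)$ with unit $e_3 = e - a_1 - b_1$, and so on. Writing $y_1 = a_1$, $y_2 = b_1$, \ldots, the inductively defined subalgebra at step $s+1$ is $A_{s+1} = (A_s)_0(y_s)$, which — since eigenspaces intersect, i.e. $(A_0(z))_0(w) = A_0(z) \cap A_0(w)$ when $w \in A_0(z)$ — is exactly $A_0(y_1) \cap \cdots \cap A_0(y_s) = A_0(y_1,\ldots,y_s)$, the special subalgebra. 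Here $y_{s+1} \in G_{s+1}(y_1,\ldots,y_s)$ is the special axis chosen in $A_{s+1}$, and each $y_i$ is an axis in $A$ by Proposition~\ref{AxA0}. That each inclusion $A_0(y_1,\ldots,y_s) \supsetneq A_0(y_1,\ldots,y_{s+1})$ is strict follows because $y_{s+1}$ lies in the former but not the latter (it is an axis, hence nonzero, and $y_{s+1} \notin A_0(y_{s+1})$ since $y_{s+1} y_{s+1} = y_{s+1} \neq 0$).

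Next I would invoke the termination half of the proof of Theorem~\ref{lengthe}. After $k$ steps one reaches $G_k = \{d_1\}$ a singleton, sets $e_{k+1} = e_k - d_1$, and argues: if $e_{k+1} \neq 0$ then $A_{k+1} = A_0(d_1, A_k)$ is nonzero with a basis of axes from $A$, one takes such an axis $q$, and the chain of applications of Lemma~\ref{predUnitProperty} together with Lemma~\ref{lem:SmallerWordsToAxis} and the Seress Lemma forces $qA = 0$, hence $q = q^2 = 0$, a contradiction. Therefore $e_{k+1} = 0$ and $A_{k+1} = A_0(y_1,\ldots,y_{k+1}) = (0)$, while $k \leq n$ because each step removes at least one generator's worth of freedom (the sets $G_i$ shrink: $|G_{i+1}| = |G_i| - 1$, starting from $|G_2| = n-1$). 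This gives exactly the displayed chain
$$
A \supset A_0(y_1) \supset A_0(y_1,y_2) \supset \cdots \supset A_0(y_1,\ldots,y_k) \supset A_0(y_1,\ldots,y_{k+1}) = (0)
$$
with $k \leq n$.

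The main obstacle is purely bookkeeping: making the identification $A_{s+1} = A_0(y_1,\ldots,y_s)$ rigorous, i.e. verifying that iterated $0$-eigenspace-taking genuinely produces the intersection and that the special-axis choices in the corollary's setup match the choices $a_1, b_1, \ldots, c_1, d_1$ in the theorem's proof. There is no new mathematical content — the inequality $k \leq n$ and the fact that the chain bottoms out at $(0)$ are both already established inside Theorem~\ref{lengthe}; the corollary is just a repackaging of that chain as a statement about special subalgebras. So I would keep the proof to a few lines: "This is a restatement of the construction in the proof of Theorem~\ref{lengthe}; the subalgebra obtained after fixing the special axes $y_1,\ldots,y_s$ is precisely the special subalgebra $A_0(y_1,\ldots,y_s)$, the inclusions are strict since $y_{s+1}$ lies in the $s$-th term but not the $(s+1)$-th, and the proof of Theorem~\ref{lengthe} shows the chain terminates at $(0)$ after $k \leq n$ steps."
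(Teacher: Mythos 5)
Your proposal is correct and takes exactly the same route as the paper: the authors' entire proof of Corollary~\ref{series} is the single sentence ``It follows from the proof of Lemma~\ref{lengthe}'', and your unpacking --- identifying the iterated $0$-eigenspaces $A_{s+1}=(A_s)_0(y_s)$ with the special subalgebras $A_0(y_1,\ldots,y_s)$ and reusing the termination argument to get $A_0(y_1,\ldots,y_{k+1})=(0)$ with $k\leq n$ --- is just a more explicit version of that same observation.
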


\begin{proof}
It follows from the proof of Lemma~\ref{lengthe}.
\end{proof}

\section*{Acknowledgments}
The work is supported by Mathematical Center in Akademgorodok 
under agreement No. 075-15-2022-281 with the Ministry of Science 
and Higher Education of the Russian Federation.

\bigskip

\noindent Ilya Gorshkov \\
\noindent Vsevolod Gubarev \\
Novosibirsk State University \\
Pirogova str. 2, 630090 Novosibirsk, Russia \\
Sobolev Institute of Mathematics \\
Acad. Koptyug ave. 4, 630090 Novosibirsk, Russia \\
e-mail: ilygor8@gmail.com, wsewolod89@gmail.com


\begin{thebibliography}{20}
\bibitem{Albert}
A.A. Albert,
A structure theory for Jordan algebras, 
Ann. Math. {\bf 48} (1947), 546--567.

\bibitem{github}
https://github.com/GubarevV/Quasi-definite-axial-algebras.

\bibitem{solid}
I. Gorshkov, S. Shpectorov, and A. Staroletov,
Solid subalgebras in algebras of Jordan type half,
preprint, 23~p.

\bibitem{3-gen}
I. Gorshkov, A. Staroletov,
On primitive 3-generated axial algebras of Jordan type,
J. Algebra {\bf 563} (2020), 74--99.

\bibitem{HRS} 
J.I. Hall, F. Rehren, S. Shpectorov, 
Primitive axial algebras of Jordan type, 
J.~Algebra {\bf 437} (2015), 79--115.

\bibitem{HSS} 
J.I. Hall, Y. Segev and S. Shpectorov, 
On primitive axial algebras of Jordan type, 
Bull. Inst. Math. Acad. Sin. (N.S.) (4) {\bf 13} (2018), 397--409.

\bibitem{Ivanov} 
A.A. Ivanov, 
The Monster Group and Majorana Involutions.
Cambridge: Cambridge University Press, 2009.

\bibitem{MR}
T. De Medts, and F. Rehren,
Jordan Algebras and 3-transposition Groups, 
J. Algebra {\bf 478} (2017), 318--340.
\end{thebibliography}
\end{document}